\newtheorem{theorem}{Theorem}[section]
\newtheorem{lemma}{Lemma}
\theoremstyle{definition}
\newtheorem{remark}{Remark}
\newtheorem*{note}{Note}
\newcommand{\R}{\mathbb{R}}
\begin{document}
\begin{frontmatter}

\title{Biharmonic nonlinear vector field equations in dimension $4$}

\author[1]{Ioannis Arkoudis\corref{cor1}}
\ead{jarkoudis@math.uoa.gr}
\cortext[cor1]{Corresponding author}

\author[1]{Panayotis Smyrnelis}
\ead{smpanos@math.uoa.gr}

\address[1]{National and Kapodistrian University of Athens, Department of Mathematics, Greece}

\begin{abstract}
Following the approach of Brezis and Lieb \cite{brezis}, we prove the existence of a ground state solution for the biharmonic nonlinear vector field equations in the limiting case of space dimension $4$. Our results complete those obtained by Mederski and Siemianowski \cite{med3} for dimensions $d\ge 5$. We also extend the biharmonic logarithmic Sobolev inequality in \cite{med3} to dimension $4$.
\end{abstract}

\begin{keyword}
biharmonic \sep ground state \sep elliptic system \sep logarithmic Sobolev inequality
\MSC[2020] 35J91 \sep 35J20 \sep 35B38 \sep 35Qxx
\end{keyword}

\end{frontmatter}

\section{Introduction and main results}
The study of \emph{ground state} problems for the Laplacian operator was mainly motivated by the search of stationary states for some nonlinear evolution equations (e.g. of the Klein-Gordon or Schr\"odinger type), and some other applications in Physics and Biology (e.g. in statistical mechanics \cite{anderson}, cosmology \cite{fra}, or population dynamics \cite{aronson,fife}). 

Given a potential $G:\R^m\to\R$ having a critical point at $0$, and satisfying some growth conditions near $0$ as well as asymptotically, we consider the semilinear system
\begin{align} \label{systemp}
-\Delta u(x)=\nabla G(u(x)), \  u:\R^d\to \R^m, \, d \geq 2, \, m\geq 1,
\end{align}
whose associate energy functional is $S(u)=\int_{\R^d}\big[\frac{1}{2}|\nabla u|^2-G(u)\big]$. The solutions of interest are those converging asymptotically to $0$, and minimizing $S$ among all nonzero solutions in an appropriate class. Such a solution is called a \emph{ground state}. Among the main contributions to solve this problem, we have to mention the seminal paper of Berestycki and Lions \cite{BerLionsI}, which was completed by the work of  Berestycki, Gallou{\"e}t, and Kavian \cite{BGK}, in the two dimensional limiting case, and by the work of Brezis and Lieb \cite{brezis} in the vector case. The method developed in all these papers is based on a constrained minimization. For more recent developments on these results we refer for instance to the papers \cite{alves,clap,med1,med2}. It is also worth noting that in view of the Sobolev embeddings, the case of dimensions $d\geq 3$ differs from the two dimensional limiting case.

Due to the increasing interest for higher order models and their applications (e.g. in nonlinear elasticity \cite{ant} and mechanics \cite{sel}), it is also relevant to investigate ground state problems for the bilaplacian operator $\Delta^2$. This was done by Mederski and Siemienowski \cite{med3} for dimensions $d\geq 5$, and as an application they derived a biharmonic logarithmic Sobolev inequality (cf.  \cite[Theorem 1.4]{med3}).

The scope of the present paper is to study the limiting case of space dimension $4$. Our approach is based on the method introduced in \cite{brezis}, as well as on some tools borrowed from \cite{med3} (e.g. Pohozaev identity in 
\cite[Theorem 1.2]{med3}). Following \cite{med3}, we also extend the biharmonic logarithmic Sobolev inequality in \cite[Theorem 1.4]{med3} to dimension $4$.

In this context, we consider the system
\begin{align} \label{system}
\Delta^2 u(x)=g(u(x)), \  u:\R^4\to \R^m, \, m\geq 1,
\end{align}
where the map $g:\R^m\to\R^m$ is the gradient (except at $u=0$) of a function $G:\R^m\to \R$ satisfying the following assumptions:
\begin{align}
&G(0)=0, \ G:\R^m\to\R \text{ is continuous, and }  G\in C^1(\mathbb{R}^4 \setminus \{0 \} ), \label{g2}\\
&g(0)=0, \text{ and } g(u)=\nabla G(u), \, \forall u\neq 0,\label{g3} \\
&G(u)<0 \ \text{ for } 0< |u| \leq \epsilon , \text{ and for some } \epsilon >0,  \label{g4}\\
&G(u_0)>0 \ \text{for some} \ u_0, \label{g5} \\
&\text{$\forall b >0, \ \forall \delta>0, \ \exists C_{\delta ,b}>0$ such that } |g(u)| \leq C_{\delta,b}+\delta e^{b|u|^{4/3}}, \quad \forall u \in \mathbb{R}^4. \label{g6}
\end{align}
That is, the function $G$ has at $u=0$ a local (but not global) maximum equal to $0$. We also point out that $G$ need not be differentiable at $0$. For instance, we may have $G(u)=-|u|$ near $0$. On the other hand, it follows from (\ref{g6}) that
\begin{align} \label{g7}
&\lim_{|u|\to \infty}\dfrac{|g(u)|}{e^{a|u|^{4/3}}}= 0 \ \text{and} \ \lim_{|u|\to \infty}\dfrac{|G(u)|}{e^{a|u|^{4/3}}}= 0 , \ \forall a>0.
\end{align}  
We shall see later that this growth condition is required to obtain compactness properties from the embedding of $W^{1,4}(\R^4)$ in the Orlicz space $L_A(\R^4)$, where the function $A$ is defined in \eqref{N function}.
 
The energy functional associated with system \eqref{system} is 
\begin{align}
&S(u)=K(u)-V(u) , \label{action} \\
&\text{where } K(u)=\dfrac{1}{2}\int \left\vert D^2u(x) \right\vert^2 dx=\dfrac{1}{2}\sum_{i,j=1}^4\int \Big|\frac{\partial^2 u}{\partial x_i\partial x_j}\Big|^2 dx   , \\
&\text{ and }V(u)=\int G(u(x))dx. \label{potential}
\end{align}
Our goal is to show that $S(u)$ admits a minimum in the set of nontrivial solutions $u\neq 0$ to system \eqref{system}, converging to $0$ as $|x|\to\infty$, in some weak sense. Such a least energy solution is called a ground state. Following \cite{brezis}, we shall look for this solution in the class:
\begin{align*}
\mathcal{C}=\left\lbrace u\in L^1_{loc}(\R^4) \ \middle| \begin{aligned}& \ \nabla u \in D^{1,2}(\mathbb{R}^4), \  G(u)\in L^1(\mathbb{R}^4)   \\  & \ \mu(\{|u|>\eta\})<\infty, \   \forall \eta>0 \end{aligned}\right\rbrace,
\end{align*}
where $\mu$ denotes the Lebesgue measure, and by definition of the space $D^{1,2}$ (cf. \cite{willem}), we have  $$\nabla u \in D^{1,2}(\mathbb{R}^4) \Leftrightarrow \nabla u \in L^4(\mathbb{R}^4), \ D^2u \in L^2(\mathbb{R}^4).$$

It is worth noting that $K(u)=\frac{1}{2}\int_{\R^4}|\Delta u|^2$, provided that $\nabla u \in D^{1,2}(\mathbb{R}^4)$ (cf. Lemma \ref{lap}). Thus,  $D^2u$ is interchangeable with $\Delta u$ in the expression of functional $K$.

\begin{theorem}\label{th1}
If assumptions (\ref{g2})-(\ref{g6}) hold, then
\begin{align} \label{minimizing set}
T=\inf \left\lbrace \int_{\mathbb{R}^4} \dfrac{1}{2}\vert D^2 u \vert^2: \  u\in \mathcal{C} \ , \ \int_{\R^4} G(u)\geq 0 \ , \ u \neq 0 \right\rbrace
\end{align}
is achieved by some $\bar u\in \mathcal{C}$, with $\bar u \neq  0$ and $\int_{\R^4} G(\bar u)=0$, solving (\ref{system}) in $\mathcal D'$. Moreover, we have 
\begin{itemize}
\item $\bar u \in C^{3,\alpha}_{loc}\cap W^{4,q}_{loc}$ for any $0<\alpha<1$ and  $1\leq q<\infty$, 
\item $\lim_{|x|\to\infty} \bar u(x)=\lim_{|x|\to\infty}|\nabla  \bar u(x)|=0$,
\item and the solution $\bar u$ is a ground state, since there holds $0<S(\bar u)\leq S(v)$ for all $v\in \mathcal{C}$ with $v \neq 0$ which are solutions of (\ref{system}) in $\mathcal{D}'$. 
\end{itemize}
\end{theorem}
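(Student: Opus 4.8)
The plan is to follow the Brezis--Lieb strategy adapted to the biharmonic setting. First I would establish that the infimum $T$ is finite and strictly positive: positivity follows from the Sobolev embedding $D^{1,2}(\R^4)\hookrightarrow L^4$ together with the continuity of the embedding controlling $\int G(u)$ (using that near $0$ one has $G(u)\le 0$ and that $\mathcal C\subset L^\infty$ by Lemma~\ref{Lemma 3.1}, so only the behavior of $G$ on bounded sets matters); finiteness follows by exhibiting an explicit competitor, e.g. a smooth compactly supported bump suitably rescaled so that $\int G(u)\ge 0$, which is possible thanks to \eqref{g5} and the dilation invariance $u\mapsto u(\cdot/\lambda)$ that scales $\int |D^2u|^2$ by $\lambda^{0}$ in dimension $4$ (this is precisely why $d=4$ is the critical case) while scaling $\int G(u)$ by $\lambda^4$. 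Then I would take a minimizing sequence $(u_n)\subset\mathcal C$ with $\int|D^2u_n|^2\to 2T$ and $\int G(u_n)\ge 0$.

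Next comes the compactness analysis. After using the scaling and translation invariances to normalize the sequence (a concentration-compactness or a direct normalization of a suitable level set $\mu(\{|u_n|>\eta\})$ as in \cite{brezis}), I would extract a weak limit: $D^2u_n\rightharpoonup D^2\bar u$ in $L^2$ and $\nabla u_n\rightharpoonup \nabla\bar u$ in $L^4$, with $u_n\to\bar u$ a.e. after passing to a subsequence. Weak lower semicontinuity gives $\int|D^2\bar u|^2\le 2T$. The crucial point is to show $\bar u\ne 0$ and $\int G(\bar u)\ge 0$. For the latter I would split $G=G^+-G^-$ where $G^-$ is the ``good'' part (coercive/negative near $0$), apply Fatou to $\int G^-(u_n)$, and use the $L^\infty$ bound plus a.e. convergence together with a Brezis--Lieb-type splitting lemma for $\int G^+(u_n)$ to pass to the limit; the sign constraint $\int G(\bar u)\ge 0$ should survive, and then a scaling argument forces $\int G(\bar u)=0$ and that $\bar u$ is a minimizer (otherwise one could rescale to strictly lower the energy, contradicting minimality). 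Nontriviality $\bar u\ne 0$ is where the normalization of the sequence is used: if $\bar u=0$ the normalized level sets would have to vanish in measure, contradicting the normalization, or equivalently the would-be minimizing sequence would split and the energy could be strictly decreased.

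Once $\bar u$ is a minimizer, I would derive the Euler--Lagrange equation: because of the constraint $\int G(u)\ge 0$, the critical point condition gives $\Delta^2\bar u=\Lambda\, g(\bar u)$ in $\mathcal D'$ for some Lagrange multiplier $\Lambda$, and then the Pohozaev identity from \cite[Theorem 1.2]{med3} (which in dimension $4$ relates $\int|D^2\bar u|^2$ and $\int G(\bar u)=0$) forces $\Lambda>0$, so after the final rescaling $x\mapsto \Lambda^{1/4}x$ we get a genuine solution of \eqref{system}. Regularity $\bar u\in C^{3,\alpha}_{loc}\cap W^{4,q}_{loc}$ then follows from a bootstrap: since $\bar u\in L^\infty_{loc}$ (indeed $\bar u\in\mathcal C\subset L^\infty$) and $g\in L^\infty_{loc}$, the right-hand side $g(\bar u)\in L^\infty_{loc}$, so $\bar u\in W^{4,q}_{loc}$ for all $q<\infty$ by elliptic $L^q$ estimates for $\Delta^2$, hence $\bar u\in C^{3,\alpha}_{loc}$ by Sobolev embedding; continuity of $g$ off $0$ and Schauder estimates upgrade this where $\bar u\ne 0$. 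The decay $\bar u(x)\to 0$ as $|x|\to\infty$ follows from $\nabla\bar u\in L^4$, $D^2\bar u\in L^2$, $\mu(\{|\bar u|>\eta\})<\infty$, and interior elliptic estimates on unit balls centered at far-away points. Finally, the inequality $0<S(\bar u)\le S(v)$ for every nontrivial distributional solution $v\in\mathcal C$ is obtained by applying the Pohozaev identity to $v$: it shows $\int G(v)=0$ (so $v$ is admissible in the variational problem for $T$) and that $S(v)=K(v)=\tfrac12\int|D^2v|^2\ge T=S(\bar u)$; positivity of $S(\bar u)=T$ was established in the first step. The main obstacle I anticipate is the compactness step — ruling out vanishing and dichotomy for the (normalized) minimizing sequence and, intertwined with it, passing to the limit in the constraint $\int G(u_n)\ge 0$ under only the weak hypotheses \eqref{g2}--\eqref{g5} on $G$ (no growth condition), where the $L^\infty$ bound of Lemma~\ref{Lemma 3.1} and a careful Brezis--Lieb splitting of $G$ into its positive and negative parts will be essential.
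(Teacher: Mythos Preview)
Your proposal has a genuine gap at the heart of the argument: the scaling step you invoke to force $\int G(\bar u)=0$ does \emph{not} work in dimension $4$. If $u_\sigma(x)=u(x/\sigma)$, then $K(u_\sigma)=K(u)$ (this is precisely the scale invariance you note), while $\int G(u_\sigma)=\sigma^4\int G(u)$. Hence if the weak limit satisfied $\int G(\bar u)>0$ you could \emph{not} rescale to strictly lower the energy; scaling only moves the constraint, never the energy. This is exactly what distinguishes $d=4$ from $d\ge 5$ and it invalidates the order of operations you propose (first pass to the limit in the constraint, then deduce saturation by scaling, then derive Euler--Lagrange). Relatedly, your plan to pass to the limit in $\int G(u_n)\ge 0$ via Fatou/Brezis--Lieb splitting is not straightforward: Fatou for $G^-$ gives $\int G^-(u)\le\liminf\int G^-(u_n)$, which goes the \emph{wrong} way for the inequality you want.

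The paper circumvents both issues by reversing the logic. After normalizing $\mu(\{|u_j|>\varepsilon\})=1$, shifting via Lieb's lemma, and extracting a nontrivial weak limit $u$, it does \emph{not} attempt to show $\int G(u)\ge 0$. Instead it runs a sequence of Brezis--Lieb-type lemmas (Lemmas \ref{lemma corresponding to 3.2}--\ref{lemf}) to derive the equation directly: from the minimizing property one gets a one-sided variational inequality, which---because $G$ is only $C^1$ away from $0$---yields $\big|\int g(u)\cdot\phi-\lambda L(\phi)\big|\le C_G\int_{\{u=0\}}|\phi|$ for some $\lambda\ge 0$; Riesz representation then gives $-\lambda\Delta^2 u+g(u)=h\chi_{\{u=0\}}$, and a regularity argument shows $\Delta^2 u=0$ a.e.\ on $\{u=0\}$, forcing $h\equiv 0$. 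Only \emph{after} obtaining $\Delta^2 u=\lambda^{-1}g(u)$ does one apply Pohozaev to conclude $\int G(u)=0$, and then weak lower semicontinuity gives $K(u)=T$. Your sketch treats the Euler--Lagrange step as routine (``the critical point condition gives $\Delta^2\bar u=\Lambda g(\bar u)$''), but with $G\notin C^1$ at $0$ and an inequality constraint whose saturation you have not established, this is the main technical content. The remaining parts of your outline (regularity bootstrap, decay via Lemma~\ref{Lemma 3.1}, least-energy property via Pohozaev applied to competitors) match the paper.
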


\begin{remark}If $g$ is locally H\"older continuous, then Theorem \ref{th1} provides a classical solution $\bar u\in C^4(\R^4)$, since the function $x\mapsto g(\bar u(x))$ is locally H\"older continuous.  
\end{remark}

As an application of Theorem \ref{th1}, we choose the potential 
\begin{equation*}
G(u)=\begin{cases}|u|^2\ln|u| &\text{ for } u\neq 0\\
0&\text{ for } u=0,
\end{cases}\end{equation*}which clearly satisfies assumptions (\ref{g2})-(\ref{g6}). Moreover, for this choice of $G$, one can see that $\mathcal C=\{u\in H^2(\R^4;\R^m): \, G^-(u)\in L^1(\R^4)\}$ (cf. Remark \ref{remf}). Following \cite{med3}, we derive in \eqref{sobin} below the biharmonic logarithmic Sobolev inequality in dimension $4$ (cf. \cite[Theorem 1.4]{med3} for dimensions $d\geq 5$). It is a generalization for the bilaplacian operator of  the classical logarithmic Sobolev inequality 
\begin{align}\label{sobin2}
\ln\left(\dfrac{1}{2\pi e}\int_{\R^4}|\nabla u|^2\right)\geq \int_{\R^4}|u|^2\ln|u| ,\text{ for } u\in H^1(\R^4), \int_{\R^4}|u|^2=1,
\end{align}
given in \cite{weiss}, which is optimal in the scalar case (cf. \cite{carl}). 
\begin{theorem}\label{th2} 
For any $u\in H^2(\R^4;\R^m)$ such that $\int_{\R^4}|u|^2=1$, there holds
\begin{align}\label{sobin}
\dfrac{1}{2} \ln\left(\dfrac{1}{2T}\int_{\R^4}|\Delta u|^2\right)=\dfrac{1}{2} \ln\left(\dfrac{1}{2T}\int_{\R^4}|D^2u|^2\right)\geq 
\int_{\R^4}|u|^2\ln|u| ,
\end{align}
with $\frac{1}{2T}<\frac{1}{(2\pi e)^2}$, where $T>0$ is the minimum in \eqref{minimizing set}. Moreover, the equality in \eqref{sobin} holds provided that $u = \bar u/\|\bar u\|_{L^2(\R^4)}$ and $ \bar u$ is a ground state solution to \eqref{system}. Conversely, if the equality in \eqref{sobin} holds for $u$, then there are uniquely determined $\mu > 0$ and $r>0$, such that $\bar u(x) = 
\mu u(rx)$ is a ground state solution to \eqref{system}.
\end{theorem}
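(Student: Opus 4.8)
The plan is to reduce the whole statement to the scaling structure of the functional for the specific potential $G(s)=|s|^2\ln|s|$, for which the dilations $u_{\mu,r}(x)=\mu u(rx)$ map $\cC$ into itself, satisfy $\int_{\R^4}|D^2u_{\mu,r}|^2=\mu^2\int_{\R^4}|D^2u|^2$ (independent of $r$), and, for $u\in\cC'$, $\int_{\R^4}G(u_{\mu,r})=\frac{\mu^2}{r^4}\big(\ln\mu+\int_{\R^4}|u|^2\ln|u|\big)$. First I would record, from \eqref{minimizing set}, the reformulation
\[
2T=\inf_{v\in\cC'}\ e^{-2\int_{\R^4}|v|^2\ln|v|}\int_{\R^4}|D^2v|^2 ,
\]
where "$\le$" comes from using $w=e^{-\int_{\R^4}|v|^2\ln|v|}v$ (the smallest dilate of $v\in\cC'$ meeting the constraint, for which $\int_{\R^4}G(w)=0$) as a competitor, and "$\ge$" from $L^2$-normalizing an arbitrary competitor of \eqref{minimizing set} (legitimate since $\cC\subset L^2$) and observing that $\int_{\R^4}G\ge 0$ bounds its $L^2$-norm from below. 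Inequality \eqref{sobin} is then just this reformulation rewritten for $v\in\cC'$.

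For the bound $\frac1{2T}<\frac1{(\pi e)^2}$ I would combine \eqref{sobin2} with a Cauchy--Schwarz interpolation. Every $v\in\cC'$ has $v,D^2v\in L^2(\R^4)$, hence $v\in H^2(\R^4)$, so $\int_{\R^4}|\nabla v|^2=-\int_{\R^4}v\cdot\Delta v\le\|v\|_{L^2}\|\Delta v\|_{L^2}=\|\Delta v\|_{L^2}$, and $\|\Delta v\|_{L^2}^2=\int_{\R^4}|D^2v|^2$ by Lemma \ref{lap}. Plugging this into \eqref{sobin2} (applied to $|v|$, using $|\nabla|v||\le|\nabla v|$) gives $\int_{\R^4}|v|^2\ln|v|\le\frac12\ln\big(\frac1{(2\pi e)^2}\int_{\R^4}|D^2v|^2\big)$, whence by the reformulation $2T\ge(2\pi e)^2$ and a fortiori $\frac1{2T}\le\frac1{(2\pi e)^2}<\frac1{(\pi e)^2}$.

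For the equality cases: if $\bar u$ is a ground state, the Pohozaev identity of \cite{med3} in dimension $4$ forces $\int_{\R^4}G(\bar u)=0$, so $\bar u$ is admissible in \eqref{minimizing set}; combining this with minimality of the action among solutions and with Theorem \ref{th1} gives $\int_{\R^4}|D^2\bar u|^2=2T$, and then, writing $\beta=\|\bar u\|_{L^2}\in(0,\infty)$ and $v=\bar u/\beta$, a direct computation yields $\int_{\R^4}|v|^2\ln|v|=-\ln\beta=\frac12\ln\big(\frac1{2T}\int_{\R^4}|D^2v|^2\big)$, i.e.\ equality in \eqref{sobin}. Conversely, if $u\in\cC'$ realizes equality in \eqref{sobin}, then by the reformulation $w:=\mu u$ with $\mu:=e^{-\int_{\R^4}|u|^2\ln|u|}$ attains the infimum $T$ and $\int_{\R^4}G(w)=0$; running the variational argument behind Theorem \ref{th1} on this minimizer, $w$ solves $\Delta^2w=\theta\,g(w)$ in $\mathcal{D}'$ for a multiplier $\theta$, and testing against $w$ (using $g(w)\cdot w=2G(w)+|w|^2$ and $\int_{\R^4}G(w)=0$) gives $\int_{\R^4}|D^2w|^2=\theta\|w\|_{L^2}^2$, so $\theta=2T/\|w\|_{L^2}^2=2T\,e^{2\int_{\R^4}|u|^2\ln|u|}=\int_{\R^4}|D^2u|^2>0$. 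Setting $r:=\theta^{-1/4}$ and $\bar u(x):=w(rx)=\mu u(rx)$, the scaling identity $\Delta^2\bar u(x)=r^4\theta\,g(\bar u(x))=g(\bar u(x))$ shows that $\bar u$ solves \eqref{system}, while $\int_{\R^4}G(\bar u)=0$ and $\int_{\R^4}|D^2\bar u|^2=2T$, so $\bar u$ is a ground state; the parameters $\mu,r$ are unique because requiring $x\mapsto\mu u(rx)$ to solve \eqref{system} forces a relation $r^4\Delta^2u=u(2\ln|u|+2\ln\mu+1)$, and comparing with the equation satisfied by $w$ pins down $\ln\mu$ and $r^4$, since $\Delta^2u$ cannot be a constant multiple of a nonzero $u\in L^2(\R^4)$.

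The main obstacle is this converse equality statement: the inequality \eqref{sobin} and the forward equality case are elementary scaling identities together with \eqref{sobin2}, but producing a genuine ground state out of an extremal $u$ requires re-entering the machinery used for Theorem \ref{th1} --- the existence of the Lagrange multiplier despite the non-smoothness of $G$ at the origin, enough regularity to justify testing the equation against $w$, the positivity of $\theta$ (equivalently, the normalization afforded by the Pohozaev identity), and the bookkeeping that makes the two dilation parameters unique.
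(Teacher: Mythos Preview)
Your proposal is correct and follows the same approach as the paper: the inequality and the forward equality case come from the same scaling $\mu=e^{-V(u)}$ (your reformulation of $2T$ as an infimum over $\cC'$ is exactly this computation), the bound on $T$ from the same combination of \eqref{sobin2} with the interpolation $\|\nabla u\|_{L^2}^2\le\|u\|_{L^2}\|\Delta u\|_{L^2}$ (the paper cites the strict version from \cite{med3}, but as you note the slack between $(2\pi e)^{-2}$ and $(\pi e)^{-2}$ makes the non-strict Cauchy--Schwarz sufficient), and the converse equality case from the fact that any minimizer of \eqref{minimizing set} solves the Euler--Lagrange equation with a positive multiplier, which the paper packages as Remark \ref{rem111}. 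Your explicit computation of the multiplier $\theta$ by testing and your Liouville-type argument for the uniqueness of $(\mu,r)$ go a bit beyond what the paper spells out, but the overall strategy is identical.
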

\begin{remark}
If $u\in H^2(\R^4;\R^m)$, we have $(|u|^2\ln |u|)^+\in L^1(\R^4)$ (cf. Remark \ref{remf}), and hence the right hand side of \eqref{sobin} is well defined, although it could be $-\infty$ if  $\int_{\R^4}(|u|^2\ln |u|)^-=+\infty$. 
\end{remark}

\begin{remark}
Equality case in \eqref{sobin} is achieved for the normalized ground state solutions provided by Theorem \ref{th1}. However, these solutions are not known explicitly, thus we neither know the exact value of the optimal constant in \eqref{sobin}. By comparing it with the optimal constant (cf. \cite{carl}) of the classical logarithmic Sobolev inequality \eqref{sobin2}, we can at least obtain that $\frac{1}{2T}<\frac{1}{(2\pi e)^2}$.
\end{remark}

Logarithmic Sobolev type inequalities appear in several equivalent formulations. The seminal inequality by Gross \cite{Gross} was established in the Gaussian setting. Weissler's approach \cite{weiss} gives the Lebesgue analogue with sharp constant which under the unit normalization
$\|u\|_{L^{2}(\mathbb R^{4})}=1$ leads to the form (\ref{sobin2}) (see also \cite{lieb book}).

In a series of papers \cite{CT02,CT04}, Cotsiolis and Tavoularis proved sharp Sobolev type inequalities with explicit best constants for higher order fractional derivatives  in the range $d>2s$.
Most relevant here is their higher order fractional derivative logarithmic result \cite{CT05}, which for every $s>0$ and every $\alpha>0$ gives a sharp
inequality with explicit optimal constants: for $u\in H^{s}(\mathbb R^{d})$,
\begin{multline}\label{Cotsiolis Tavoularis inequality}
\int_{\mathbb R^{d}} |u|^{2}\ln\!\Big(\frac{|u|^{2}}{\|u\|_{2}^{2}}\Big)\,dx
+\Big(d+\frac{d}{s}\ln\alpha+\ln\!\Big(s\,\frac{\Gamma(d/2)}{\Gamma(d/(2s))}\Big)\Big)\|u\|_{2}^{2}\\
\;\le\;\frac{\alpha^{2}}{\pi^{s}}\|(-\Delta)^{s/2}u\|_{2}^{2}.
\end{multline}
In particular, for $s=1$ one recovers
the Gross inequality (\ref{sobin2}) by setting $\|u\|_{2}=1$ and minimizing in $\alpha$.
\par 
Finally, Mederski and Siemianowski (for $d\ge 5$) derived a scalar biharmonic logarithmic Sobolev inequality tied to the bilaplacian, i.e. for $u\in D^{2,2}(\R^d)$ with $\|u\|_{2}=1$,
\begin{equation} \label{Mederski inequality}
\int_{\mathbb R^{d}} |u|^{2}\ln|u|\,dx \;\le\; \frac{d}{8}\ln\!\Big(C\int_{\mathbb R^{d}}|\Delta u|^{2}\,dx\Big),
\end{equation}
with some $0<C<\big(\frac{2}{\pi e d}\big)^{2}$. Moreover, they relate the equality to normalized ground state solutions of
the associated biharmonic problem and note that the exact optimal constant is not explicitly computable. Note that, the case $s=2$ in (\ref{Cotsiolis Tavoularis inequality}) corresponds to the logarithmic inequality controlled by $\|\Delta u\|_{2}^{2}$, matching the inequality above.

The proofs of Theorems \ref{th1} and \ref{th2} are given in Sections \ref{sec:sec2} and \ref{sec:sec3} respectively, while in Section \ref{sec:sec4} we gather some technical lemmas.

\section{Proof of Theorem \ref{th1}}\label{sec:sec2}
Let $\{ u_j \}\subset \mathcal C$ be a minimizing sequence for (\ref{minimizing set}), i.e. $$\lim_{j\to\infty}\dfrac{1}{2}\int_{\R^4}|D^2 u_j|^2 =T.$$  Since in the limiting case of dimension $4$, $K(u)$ is invariant under scaling, we may assume that 
\begin{align} \label{measure is 1}
\mu(\{|u_j|>\varepsilon /2\})=1, \text{  where $\epsilon$ is defined in \eqref{g4}}.
\end{align}
It is clear that the norms  $\Vert D^2 u_j \Vert_{L^2(\mathbb{R}^4)}$ are uniformly bounded. Therefore, due to the Sobolev imbedding of $D^{1,2}(\mathbb{R}^4)$ in $L^4(\mathbb{R}^4)$ (cf. \cite[Thm. 1.8]{willem}), we also obtain that the norms $\Vert \nabla u_j \Vert_{L^4(\mathbb{R}^4)}$ are uniformly bounded.
\par Next, we show that  $\{ u_j \}\subset L^q_{loc}(\R^4)$, $\forall  q \in [1,+\infty)$, and estimate the $L^q$ norm of $u_j$.
\begin{lemma}\label{Lemma 3.1}
We have that
\begin{align}\label{inequality of lemma 3.1}
\int_{|u_j|>\varepsilon /2}|u_j|^q \leq C_q , \ \forall q  \in [1,+\infty) , \ \forall j \text{ and for some constant } C_q.
\end{align}
\end{lemma}
\begin{proof}
Our first claim is that given $\phi \in L^1_{loc}(\mathbb{R}^4)$, such that $\nabla \phi \in L^4(\mathbb{R}^4)$ and $\mu (supp\, \phi)<\infty$, we have
\begin{align}\label{blq}
\Vert \phi \Vert_{L^q(\R^4)} &\leq k_q \Vert \nabla \phi \Vert_{L^4(\R^4)} (\mu (supp\, \phi ))^{\frac{1}{q}}, \ \forall q \geq\frac{4}{3},
\end{align}
and for a constant $k_q>0$. To establish \eqref{blq}, we follow the steps of the proof of \cite[inequality (3.10)]{brezis}. Indeed, it holds (cf. \cite[Theorem p. 125]{nir0}):
\begin{align}\label{blqaa}
\Vert \phi \Vert_{L^\frac{4}{3}(\R^4)} &\leq k \Vert \nabla \phi \Vert_{L^1(\R^4)},\ \forall \phi\in C^1_0(\R^4),
\end{align}
for a constant $k>0$. Next, by applying \eqref{blqaa} to the functions $\phi^n$, $n=2,3,\ldots$, we get by induction 
\begin{align}\label{blqa}
\Vert \phi \Vert_{L^\frac{4n}{3}(\R^4)} &\leq k_n \Vert \nabla \phi \Vert_{L^1(\R^4)}^{\frac{1}{n}} \Vert \nabla \phi \Vert_{L^4(\R^4)}^{\frac{n-1}{n}},
\end{align}
for a constant $k_n>0$, and by interpolation
\begin{align}\label{blqaaa}
\Vert \phi \Vert_{L^q(\R^4)} &\leq k_q \Vert \nabla \phi \Vert_{L^1(\R^4)}^{\frac{4}{3q}} \Vert \nabla \phi \Vert_{L^4(\R^4)}^{1-\frac{4}{3q}}, \ \forall q\geq \frac{4}{3},
\end{align}
for a constant $k_q>0$. Moreover, smoothing by convolution, we notice that \eqref{blqaaa} still holds for functions $\phi\in L^\infty(\R^4)$, such that $\nabla \phi \in L^4(\R^4)$, and $\phi$ has compact support. On the other hand, a standard cut-off and truncation argument shows the validity of \eqref{blqaaa} for functions 
$\phi\in L^1_{loc}(\R^4)$, such that $\nabla \phi \in L^4(\R^4)$, and $\mu(supp\, \phi)<\infty$.
Finally, \eqref{blq} follows from \eqref{blqaaa} by H{\"o}lder's inequality.

To conclude the proof of the lemma, we apply \eqref{blq} to $\phi =(|u_j|-\varepsilon /2 )^+$,  and obtain 
\begin{align*}
\Vert(|u_j|-\varepsilon /2 )^+\Vert_{L^q(\R^4)}\leq k_q \Vert \nabla u_j \Vert_{L^4(\R^4)}, 
\end{align*}
which implies \eqref{inequality of lemma 3.1}.
\end{proof} 

At this stage, we recall the following compactness result \cite[Lemma 6]{lieb}.
\begin{lemma}\label{shift lemma}
Let $v \in L^4_{loc}(\mathbb{R}^4)$, $\nabla v \in L^4(\mathbb{R}^4)$, $\Vert \nabla v \Vert_{L^4} \leq C$ and $\mu(\{|v|>\varepsilon / 2\}) \geq \delta >0$. Then, there exists a shift $T_yv(x)=v(x+y)$ such that, for some constant $\beta=\beta(C,\delta, \varepsilon)>0$, $\mu(B\cap \{|T_yv|>\varepsilon /4\})>\beta$, where $B=\{x\in\R^4:|x|\leq 1\}$.
\end{lemma}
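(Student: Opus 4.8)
The plan is to reduce the statement to a single unit cube and then argue by contradiction. Tile $\R^4$ by the half-open unit cubes $Q=\prod_{i=1}^4[n_i,n_i+1)$, $n\in\mathbb Z^4$, write $h=|v|\in W^{1,4}_{\mathrm{loc}}(\R^4)$ (so $|\nabla h|\le|\nabla v|$ a.e.), and set $a_Q=\mu(Q\cap\{h>\varepsilon\})$, $b_Q=\mu(Q\cap\{h>\varepsilon/2\})$. It suffices to produce one cube $Q$ with $b_Q>\beta$: translating by the center of $Q$ turns it into $Q_0=[-1/2,1/2)^4\subset B$, and then $\mu(B\cap\{|T_yv|>\varepsilon/2\})\ge\mu(Q_0\cap\{|T_yv|>\varepsilon/2\})=b_Q>\beta$. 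So assume, for a constant $\beta\le 1/2$ to be fixed below, that $b_Q\le\beta$ for every cube of the tiling.

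The core of the argument is a scale-free lower bound for the $L^4$-Dirichlet energy over a cube on which $h$ crosses from $\varepsilon/2$ to $\varepsilon$. Fix $Q$ with $a_Q>0$ and put $\Phi(t)=\mu(\{h>t\}\cap Q)$, a nonincreasing function on $[\varepsilon/2,\varepsilon]$ with $\Phi(\varepsilon/2)=b_Q$ and $\Phi(\varepsilon)=a_Q>0$. By the coarea formula, $\int_Q|\nabla v|^4\ge\int_{\varepsilon/2}^{\varepsilon}\big(\int_{\{h=t\}\cap Q}|\nabla h|^3\,d\mathcal H^3\big)\,dt$. For a.e. $t$ we combine three facts: Hölder's inequality on the level set $\{h=t\}\cap Q$ with exponents $4$ and $4/3$ (after discarding the $\mathcal H^3$-nullset where $\nabla h=0$); the coarea bound $\int_{\{h=t\}\cap Q}|\nabla h|^{-1}\,d\mathcal H^3\le -\Phi'(t)$; and the relative isoperimetric inequality in the unit cube, valid since $\Phi(t)\le b_Q\le 1/2$, which gives $\mathcal H^3(\{h=t\}\cap Q)\ge\mathrm{Per}(\{h>t\};\mathrm{int}\,Q)\ge c_0\,\Phi(t)^{3/4}$. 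Together these yield $\int_{\{h=t\}\cap Q}|\nabla h|^3\,d\mathcal H^3\ge c_0^4\,\Phi(t)^3(-\Phi'(t))^{-3}$. Hence $\int_Q|\nabla v|^4\ge c_0^4\int_{\varepsilon/2}^{\varepsilon}p(t)^{-3}\,dt$ with $p=-(\ln\Phi)'\ge0$, and since $\int_{\varepsilon/2}^{\varepsilon}p\,dt\le\ln(b_Q/a_Q)$, Jensen's inequality for the convex map $r\mapsto r^{-3}$ on an interval of length $\varepsilon/2$ gives
\[
\int_Q|\nabla v|^4\ \ge\ \frac{c_1\,\varepsilon^4}{(\ln(b_Q/a_Q))^3},\qquad c_1=c_1(c_0)>0 .
\]
The borderline values $a_Q=0$ and $a_Q=b_Q$ are harmless: $a_Q=b_Q$ would force $\{h>\varepsilon/2\}\cap Q$ to be, up to a nullset, open and closed in $\mathrm{int}\,Q$, hence $b_Q\in\{0,1\}$, contradicting $b_Q\le 1/2$.

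Summing the last inequality over the tiling, with $L_Q=\ln(b_Q/a_Q)\in(0,\infty)$ for the cubes with $a_Q>0$, the gradient bound gives $C^4\ge\|\nabla v\|_{L^4}^4=\sum_Q\int_Q|\nabla v|^4\ge c_1\varepsilon^4\sum_{a_Q>0}L_Q^{-3}$. On the other hand $a_Q=b_Q e^{-L_Q}\le\beta e^{-L_Q}$, and $e^{-L}\le 27e^{-3}L^{-3}$ for every $L>0$, so
\[
\delta\ \le\ \mu(\{h>\varepsilon\})\ =\ \sum_Q a_Q\ \le\ \beta\sum_{a_Q>0}e^{-L_Q}\ \le\ 27e^{-3}\beta\sum_{a_Q>0}L_Q^{-3}\ \le\ \frac{27e^{-3}C^4}{c_1\varepsilon^4}\,\beta .
\]
Choosing $\beta=\beta(C,\delta,\varepsilon)$ with $\beta\le 1/2$ and small enough that the right-hand side is $<\delta$ yields a contradiction; hence some cube satisfies $b_Q>\beta$, and the reduction of the first paragraph concludes the proof.

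The step I expect to be the main obstacle is the local estimate $\int_Q|\nabla v|^4\gtrsim\varepsilon^4(\ln(b_Q/a_Q))^{-3}$. A crude Poincaré–Sobolev bound on the cube gives only something of the form $\int_Q|\nabla v|^4\gtrsim\varepsilon^4 a_Q^3 b_Q^{-3}$, which is useless in the summation because $a_Q$ can be arbitrarily small and $\sum_Q a_Q^3$ admits no lower bound in terms of $\delta$. Extracting an essentially size-independent amount of energy from each crossing reflects the conformal (scale) invariance of $\int|\nabla v|^4$ in dimension $4$, and this is what forces the coarea $+$ relative-isoperimetric $+$ Jensen route; alternatively one may invoke the Trudinger–Moser inequality on the cube, whose $L^q$ norm constant grows like $q^{3/4}$, to obtain the weaker-looking bound with $\ln(1/a_Q)$ in place of $\ln(b_Q/a_Q)$, which still suffices once the assumption $b_Q\le\beta$ is used. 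The remaining measure-theoretic points — finite perimeter of the superlevel sets, the inequality relating $\Phi'$ to the level-set integral of $|\nabla h|^{-1}$, and the fact that any singular part of $\Phi$ only improves the estimate — are routine for $W^{1,4}_{\mathrm{loc}}$ functions.
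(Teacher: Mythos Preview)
The paper does not prove this lemma; it merely quotes it as \cite[Lemma~6]{lieb}. Your proposal is therefore a self-contained proof where the paper supplies none, and the argument is essentially correct.

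Your route---tiling by unit cubes, deriving the local estimate $\int_Q|\nabla v|^4\gtrsim\varepsilon^4(\ln(b_Q/a_Q))^{-3}$ via the coarea formula, the relative isoperimetric inequality in the cube, and Jensen's inequality, and then summing with the elementary bound $e^{-L}\le 27e^{-3}L^{-3}$---differs from Lieb's original argument, which proceeds through a borderline Sobolev/Trudinger-type inequality on unit balls rather than through level sets. Your approach has the merit of making the conformal invariance of $\int|\nabla v|^4$ in dimension $4$ explicit and of isolating exactly where the logarithmic loss enters; the price is that it is somewhat longer and leans on more geometric measure theory.

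Two small technical remarks. First, your exclusion of the degenerate case $a_Q=b_Q>0$ by saying $\{h>\varepsilon/2\}\cap Q$ is ``open and closed in $\mathrm{int}\,Q$'' is not literally valid for a function that is only in $W^{1,4}_{\mathrm{loc}}$; the clean fix is to observe that the truncation $\tilde h=\min(\max(h,\varepsilon/2),\varepsilon)$ would then have vanishing weak gradient on the connected cube, hence be constant, contradicting $0<b_Q\le\tfrac12<1$. Second, the identifications you invoke---$-\Phi'(t)\ge\int_{\{h=t\}\cap Q}|\nabla h|^{-1}\,d\mathcal H^3$, finite perimeter of the superlevel sets for a.e.\ $t$, and $\mathcal H^3(\{h=t\}\cap Q)\ge\mathrm{Per}(\{h>t\};\mathrm{int}\,Q)$---do hold for Sobolev (not just Lipschitz) functions, but they are a step beyond the smooth coarea formula and deserve a precise reference rather than the label ``routine''.
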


In view of Lemma \ref{shift lemma}, we can shift each $u_j$ in such a way that  $\mu(B\cap \{|T_{x_j}u_j|>\epsilon/4\} )\geq \beta$. Thus, we may assume that  $\mu(B\cap \{|u_j|>\epsilon/4\} )\geq \beta$. On the other hand, Lemma \ref{Lemma 3.1} implies that given a bounded set $Q\subset\R^4$, the norms $\Vert  u_j \Vert_{L^q(Q)}$ are uniformly bounded. Consequently, we can extract a subsequence still called $\{u_j\}$ such that
\begin{enumerate}
\item[(i)] $u_j\to u$ in $L^q(Q)$, $\forall q\in [1,+\infty)$, $\forall Q\subset\R^4$ bounded,
\item[(ii)] $u_j\to u$ a.e. in $\mathbb{R}^4$,
\item[(iii)] $\nabla u_j \rightharpoonup \nabla u$ in $L^4(\mathbb{R}^4)$,
\item[(iv)] $D^2 u_j \rightharpoonup D^2 u$ in $L^2(\mathbb{R}^4)$,
\item[(v)] $\mu(B\cap \{|u| \geq \varepsilon /4\})\geq \beta >0 \ \Rightarrow u \not\equiv 0$,
\item[(vi)] $\mu(\{|u| >\varepsilon / 2\})\leq 1$,
\item[(vii)] $u \in \mathcal C$,
\end{enumerate}
Properties (v) and (vi) of the limit $u$ follow from the a.e. convergence in (ii). To check that $G(u)\in L^1$, we use assumption (\ref{g7}) and the embedding
\begin{align} \label{Orlicz embedding}
W^{1,4}(\R^4) \hookrightarrow L_A(\R^4), 
\end{align}
where $L_A$ is the Orlicz space corresponding to the function
\begin{align}\label{N function}
A(t)=\exp(t^{4/3})-1-t^{4/3}-\dfrac{1}{2}t^{8/3}
\end{align}
(cf. Trudinger's theorem in \cite[\S 8.29]{adams}). Since the sequence $\{(|u_j|-\epsilon/2 )^+\}$ is uniformly bounded in $W^{1,4}(\R^4)$, it follows that there exists $k_1>0$, such that the integrals $\int_{\R^4}A(k_1(|u_j|-\epsilon/2 )^+)$ are uniformly bounded. Moreover, it holds $G^+(u_j)\leq k_2 A(k_1(|u_j|-\epsilon/2 )^+)$, for a constant $k_2>0$ (cf. \eqref{g4} and \eqref{g7}). Consequently, the integrals $\int_{\R^4}G^+(u_j)$ are uniformly bounded, and we deduce from Fatou's lemma that $\int_{\R^4}G^+(u)<\infty$.
Similarly, since we have $\int_{\R^4}G^-(u_j)\leq \int_{\R^4}G^+(u_j)$, the integrals $\int_{\R^4}G^-(u_j)$ are uniformly bounded, and we obtain as previously that  $\int_{\R^4}G^-(u)<\infty$. Therefore, $G(u)\in L^1$. To prove that $u\in\mathcal C$, it remains to show that $\mu(\{|u|>\alpha\})<\infty$, $\forall \alpha>0$. Indeed, if $\alpha\geq \epsilon/2$, this is clear in view of (vi) above. Otherwise, if $0<\alpha<\epsilon/2$, we have in view of hypothesis \eqref{g4}:
$$\mu(\{\alpha<|u|\leq \epsilon/2\}) \min\{|G(v)|: \alpha\leq|v|\leq\epsilon/2\}\leq \int_{\alpha \leq|u|\leq\epsilon/2}|G(u)| <\infty,$$
thus $\mu(\{\alpha<|u|\leq \epsilon/2\})<\infty\Rightarrow \mu(\{|u|>\alpha\})<\infty$. This completes the proof of property (vii) above.

Following \cite{brezis}, we also define the class
\begin{align}\label{new class K}
\mathcal{K} :=\left\lbrace \phi :\R^4\to\R^m \ | \  \phi\in L^1_{loc}, \ \nabla \phi \in L^4  ,  \ D^2\phi \in L^2, \ \mu (supp\, \phi )<\infty \right\rbrace.
\end{align}
\begin{remark} Due to \eqref{blq} we have that $\mathcal K\subset L^q(\mathbb{R}^4)$, $\forall q\in[1,\infty)$.
\end{remark} 

In the sequel, we proceed  in several steps as in \cite{brezis}, to construct the solution of Theorem \ref{th1}.

\begin{lemma}\label{lemma corresponding to 3.2}
Let $\phi \in \mathcal{K}$ be such that
\begin{align}\label{gpos}
\int_{\R^4}( G(u+\phi)-G(u))>0.
\end{align}
Then,
\begin{align}\label{limeq2}
\int_{\R^4} D^2 u \cdot D^2 \phi + \dfrac{1}{2}\int_{\R^4} \vert D^2\phi \vert^2 \geq 0, \text{ where } D^2 u \cdot D^2 \phi=\sum_{i,j=1}^4\partial^2_{ij} u\cdot \partial^2_{ij} \phi.
\end{align}
\end{lemma}
\begin{note}
We shall see in the proof of Lemma \ref{lemma corresponding to 3.2} that $G(u+\phi)\in L^1(\R^4)$, $\forall \phi \in \mathcal K$. 
\end{note}

\begin{proof}
Let $\phi\in \mathcal K$. We first notice that the sequence $\{(|u_j+\phi|-\epsilon/2)^+\}$ is uniformly bounded in $L^4(\R^4)$. Indeed, this follows  from the inequality
\begin{align*}
\int_{|u_j+\phi|>\epsilon/2}|u_j+\phi|^4&\leq \int_{|u_j|>\epsilon/2}|u_j+\phi|^4+\int_{supp \, \phi \cap\{|u_j|\leq\epsilon/2\}} |u_j+\phi|^4,
\end{align*}
combined with \eqref{inequality of lemma 3.1}, and the inclusion  $\mathcal K\subset L^4(\mathbb{R}^4)$. Moreover, one can see that the sequence $\{(|u_j+\phi|-\epsilon/2)^+\}$ is also uniformly bounded in $W^{1,4}(\R^4)$. Therefore, there exists $k_1>0$, such that the integrals $\int_{\R^4}A(2 k_1(|u_j+\phi|-\epsilon/2 )^+)$ are uniformly bounded. Next, we check that the sequence $\{G(u_j+\phi)\}$ is uniformly integrable on $\R^4$.
To prove this, we use the bounds $|G(u)|\leq k_2+ A(k_1(|u|-\epsilon/2 )^+)$,  and $|A(|u|)|^2\leq k_3+A(2|u|)$, holding in $\R^m$ for some constants $k_2,k_3>0$. Then, given $\omega\subset \R^4$, with $\mu(\omega)<\infty$, we obtain
\begin{align*}
\int_{\omega}|G(u_j+\phi)|&\leq k_2 \mu(\omega)+\int_{\omega}A( k_1(|u_j+\phi|-\epsilon/2 )^+)\\
&\leq k_2 \mu(\omega)+(\mu(\omega))^\frac{1}{2}\Big( \int_{\omega}|A( k_1(|u_j+\phi|-\epsilon/2 )^+)|^2\Big)^\frac{1}{2}\\
&\leq k_2 \mu(\omega)+(\mu(\omega))^\frac{1}{2}\Big(k_3 \mu(\omega)+\int_{\R^4}A(2 k_1(|u_j+\phi|-\epsilon/2 )^+)\Big)^\frac{1}{2},
\end{align*}
which implies the uniform integrability of $\{G(u_j+\phi)\}$ on $\R^4$. Consequently, since $G(u_j+\phi)\to G(u+\phi)$ a.e. in $\R^4$, we deduce that $G(u+\phi)$ is integrable on $\omega=supp\, \phi$, as well as on $\R^4$. In addition, we have $\lim_{j\to\infty}\int_{supp\, \phi}G(u_j+\phi)=\int_{supp\, \phi}G(u+\phi)$. The same argument also proves that the sequence $\{G(u_j)\}$ is uniformly integrable on $\R^4$, and that $\lim_{j\to\infty}\int_{supp\, \phi}G(u_j)=\int_{supp\, \phi}G(u)$. Therefore, we have $\lim_{j\to\infty}\int_{\R^4}(G(u_j+\phi)-G(u_j))=\int_{\R^4}(G(u+\phi)-G(u))$.

Finally, assuming \eqref{gpos}, $u_j+\phi \in \mathcal C$ satisfies $\int_{\R^4}(G(u_j+\phi)-G(u_j)) > 0$, as well as $\int_{\R^4}G(u_j+\phi) > 0$, for $j$ large enough.
Thus, it follows that
\begin{equation}\label{limeq}
\frac{1}{2}\int_{\R^4}|D^2 (u_j+\phi)|^2=\frac{1}{2}\int_{\R^4}|D^2 (u_j)|^2+\int_{\R^4}D^2 u_j \cdot D^2\phi+\frac{1}{2}\int_{\R^4}|D^2 \phi|^2\geq T.
\end{equation}
Since, $\lim_{j\to\infty}\frac{1}{2}\int_{\R^4}|D^2 (u_j)|^2=T$, by letting $j\to\infty$ in \eqref{limeq}, we get \eqref{limeq2}.
  \end{proof}

\begin{lemma}\label{lem2}
There exists a constant $ C_G $ (depending only on $ G $) such that if $ \phi \in \mathcal{K} $, then
\begin{align}\label{eqlem2}
\int_{\R^4} g(u) \cdot \phi - C_G \int_{u=0} |\phi| > 0 \ \Rightarrow \ \int_{\R^4} D^2 u \cdot D^2 \phi \geq 0  . 
\end{align}
\end{lemma}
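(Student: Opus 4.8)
The goal is to upgrade Lemma~\ref{lemma corresponding to 3.2} from finite perturbations $\phi$ with $\int(G(u+\phi)-G(u))>0$ to a linearized statement involving $\int g(u)\cdot\phi$. The natural move, following Brezis--Lieb, is to take a small-amplitude perturbation $t\phi$ for $t\to 0^+$, expand $G(u+t\phi)-G(u)$, and extract the derivative $g(u)\cdot\phi$. The difficulty, and the reason the constant $C_G$ and the boundary term $\int_{u=0}|\phi|$ appear, is precisely that $G$ need not be differentiable at $0$: on the set $\{u=0\}$ one only has $G(t\phi)\geq -k t|\phi|$ from assumption~\eqref{g4} (indeed $G(v)\ge -k|v|$ for $|v|$ bounded, with $k$ depending only on $G$ via the local behavior near $0$ and the uniform $L^\infty$ bound on the class $\mathcal C$), so the ``error'' on $\{u=0\}$ is at worst linear in $t$ with a fixed constant. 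This is what forces us to subtract $C_G\int_{u=0}|\phi|$.

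\textbf{Key steps.} First I would fix $\phi\in\mathcal K$ with $\int_{\R^4} g(u)\cdot\phi - C_G\int_{u=0}|\phi| > 0$, where $C_G$ is the constant $k$ from the inequality $G(v)\ge -k|v|$ valid for $|v|\le \sup_j\|u_j\|_{L^\infty}$ (this bound is legitimate since $\supp\phi$ is compact and $u,\phi$ are bounded there). Next, for $t\in(0,1]$ I would estimate $\int_{\R^4}\big(G(u+t\phi)-G(u)\big)$ by splitting $\supp\phi$ into $A=\{u\neq 0\}$ and $Z=\{u=0\}$. On $Z$, $G(u+t\phi)-G(u)=G(t\phi)\ge -C_G t|\phi|$. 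On $A$, since $G\in C^1(\R^m\setminus\{0\})$ and $u$ is continuous and nonzero there, a first-order Taylor expansion gives $G(u+t\phi)-G(u)=t\,g(u)\cdot\phi + o(t)$ pointwise, with an integrable dominating function (because $g$ is locally bounded and $u,\phi$ are bounded on the compact set $\supp\phi$); here one must be slightly careful near $\partial A\cap\supp\phi$, i.e.\ points where $u$ is small but nonzero, but the dominated convergence argument still applies since $\mu(A\cap\supp\phi)<\infty$ and the integrand is bounded by $C|\phi|$. Hence
\[
\liminf_{t\to 0^+}\frac{1}{t}\int_{\R^4}\big(G(u+t\phi)-G(u)\big)\ \ge\ \int_{A}g(u)\cdot\phi\ -\ C_G\int_{Z}|\phi|\ =\ \int_{\R^4}g(u)\cdot\phi - C_G\int_{u=0}|\phi|\ >\ 0,
\]
using $g(0)=0$ so that the integral over $A$ equals the integral over all of $\R^4$. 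Therefore $\int_{\R^4}(G(u+t\phi)-G(u))>0$ for all sufficiently small $t>0$, and Lemma~\ref{lemma corresponding to 3.2} applied to $t\phi$ yields
\[
\int_{\R^4} D^2u\cdot D^2\phi\ +\ \frac{t}{2}\int_{\R^4}|D^2\phi|^2\ \ge\ 0.
\]
Letting $t\to 0^+$ gives the conclusion $\int_{\R^4}D^2u\cdot D^2\phi\ge 0$.

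\textbf{Main obstacle.} The delicate point is controlling $G(u+t\phi)-G(u)$ uniformly in $t$ near the free boundary $\{u=0\}$: on the set where $u\neq 0$ but $|u|$ is tiny, the $C^1$ expansion of $G$ has a modulus of continuity for $g$ that can blow up as $u\to 0$, so one cannot naively dominate $\frac{1}{t}(G(u+t\phi)-G(u))$ by a single integrable function valid for all small $t$. The fix is to not expand on that bad set at all: on $\{0<|u|\le t|\phi|\}$ one uses instead the crude two-sided bound $|G(u+t\phi)-G(u)|\le C t|\phi|$ coming from $|\nabla G|\le C$ on an annulus away from $0$ together with $G(v)\ge -C_G|v|$, and one observes that $\mu(\{0<|u|\le t|\phi|\}\cap\supp\phi)\to 0$ as $t\to 0$ (since $\{u=0\}$ is contained in the intersection of these shrinking sets), so its contribution to $\frac1t\int(\cdots)$ is absorbed into the $C_G\int_{u=0}|\phi|$ term in the limit. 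Handled this way, the Fatou/dominated convergence bookkeeping closes cleanly.
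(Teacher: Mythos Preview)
Your proposal is correct and follows essentially the same route as the paper: split $\supp\phi$ into $\{u\neq 0\}$ and $\{u=0\}$, expand to first order on the former and use the bound $|G(v)|\le C_G|v|$ on the latter, then invoke Lemma~\ref{lemma corresponding to 3.2} with $t\phi$ and let $t\to 0^+$. One remark: the ``main obstacle'' you describe is not actually present. Since $g\in L^\infty_{loc}(\R^m)$ (with $g(0)=0$), $G$ is locally Lipschitz, so on the compact set $\supp\phi$ one has the uniform bound $\big|\tfrac{1}{t}(G(u+t\phi)-G(u))\big|\le C|\phi|$ for all $t\in(0,1]$, and dominated convergence on $\{u\neq 0\}$ applies directly without any further decomposition near the free boundary.
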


\begin{note}
We shall see in the proof of Lemma \ref{lem2} that $g(u)\cdot \phi\in L^1(\R^4)$, $\forall \phi \in \mathcal K$. 
\end{note}
\begin{proof} Let $\phi \in \mathcal K$. By dominated convergence, one can see that $g(u) \cdot \phi \in L^1(\R^4)$, and as \( t \to 0^+ \), we have
\begin{equation}\label{cvdom}
\int_{u\neq 0} \big( G(u+t\phi) - G(u) \big)  = t \int_{u\neq 0} g(u) \cdot \phi  + o(t)=t \int_{\R^4} g(u) \cdot \phi + o(t)  .
\end{equation}
Indeed, it is clear that $\lim_{t\to 0}\frac{ G(u(x)+t\phi(x)) - G(u(x)) }{t}=g(u(x))\cdot \phi (x)$, provided that $u(x)\neq 0$. On the other hand, since $(|u|-\epsilon/2)^++|\phi|\in W^{1,4}(\R^4)$ (cf. \eqref{blq}), there exists $k_1>0$, such that $\int_{\R^4}A(2 k_1[(|u|-\epsilon/2 )^++|\phi|])<\infty$.  Next, in view of the bound $|g(u)|\leq k_2+ A(k_1(|u|-\epsilon/2)^+)$ and the monotonicity of $A$, we deduce that 
\begin{align*}
\forall s,t \in [0,1]: \ |g(u+st\phi)|&\leq  k_2+ A(k_1(|u+st\phi|-\epsilon/2)^+)\\
&\leq k_2+ A(k_1[(|u|-\epsilon/2)^++|\phi|]).
\end{align*}
Consequently, we obtain the inequality
\begin{multline*}
\Big|\frac{ G(u(x)+t\phi(x)) - G(u(x)) }{t}\Big|=\Big|\int_0^1 \big(g(u(x)+st\phi(x))\cdot \phi(x)\big)ds\Big|  \\ 
\leq  k_2|\phi(x)|+  A(k_1[(|u(x)|-\epsilon/2)^++|\phi(x)|])|\phi(x)|,
\end{multline*}
whose right hand side is an integrable function (since $A(k_1[(|u(x)|-\epsilon/2)^++|\phi(x)|])\in L^2(supp\, \phi)$ in view of the bound  $|A(|u|)|^2\leq k_3+A(2|u|)$). This proves that $g(u) \cdot \phi \in L^1(\R^4)$ and \eqref{cvdom} holds.

Our next claim is that
\begin{equation}\label{claim1}
\left| \int_{u=0} G(t\phi)  \right| \leq C_G  t\int_{u=0} |\phi| + o(t), \text{ as } t\to 0^+.
\end{equation}

To justify this estimate, we notice that given $\phi  \in \mathcal{K} \subset W^{1,4}(\R^4)$, there  exists $k_4>0$ such that $A(k_4|\phi|)\in L^1(\R^4)$. In view of the bound $|G(u)|\leq C_G|u|+k_5 A(k_4 |u|)$, holding in $\R^m$ for a constant $C_G>0$ depending only on $G$, and a constant $k_5>0$ depending also on $k_4$ (cf. \eqref{g6}), it follows that
\begin{equation*}
\left| \int_{u=0} G(t\phi)  \right| \leq C_G  t\int_{u=0} |\phi| + k_5 \int_{u=0} A(k_4t|\phi|). 
\end{equation*}
Finally, using the fact that $ A(k_4t|\phi|)\leq t^4 A(k_4|\phi|)$ holds for $0<t<1$, we deduce \eqref{claim1}.

Now, choose $\phi \in \mathcal{K}$ such that
\[
\int_{\R^4} g(u) \cdot \phi - C_G \int_{u=0} |\phi| > 0.
\]
Then, for small enough \( t > 0 \), it follows from \eqref{cvdom} and \eqref{claim1} that
\[
\int_{\R^4} \big( G(u+t\phi) - G(u) \big) > 0.
\]
By Lemma \ref{lemma corresponding to 3.2}, we conclude that
\[
\int_{\R^4} D^2 u \cdot D^2 \phi + \dfrac{t}{2}\int_{\R^4} \vert D^2\phi \vert^2 \geq 0, \  \text{ for  } 0<t\ll 1\Rightarrow \int_{\R^4} D^2 u \cdot D^2 \phi \geq 0.
\]
\end{proof}

\begin{lemma}\label{lem4}
Consider the linear functional $L(\phi ) = \int_{\R^4} D^2 u \cdot D^2 \phi$. There is some $\phi_0 \in \mathcal{K}$ such that $L(\phi_0) \neq 0$ and $\phi_0 =0 $ on the set $\{u=0\}$.
\end{lemma}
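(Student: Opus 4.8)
The plan is to construct $\phi_0$ by hand, taking it to be a large truncation of $u$ itself. Concretely, I would fix a cutoff $\chi\in C_0^\infty(\R^4)$ with $0\le\chi\le1$, $\chi\equiv1$ on $B(0,1)$ and $\supp\chi\subset B(0,2)$, set $\chi_n(x)=\chi(x/n)$ (so $|\nabla\chi_n|\le C/n$ and $|D^2\chi_n|\le C/n^2$, supported in the annulus $A_n:=B(0,2n)\setminus B(0,n)$ with $|A_n|\le Cn^4$), and define $\phi_n:=\chi_n u$. The first step is to check that $\phi_n\in\mathcal{K}$ and $\phi_n=0$ on $\{u=0\}$: since $u\in L^\infty(\R^4)\cap C^\alpha(\R^4)$ by Lemma \ref{Lemma 3.1}, $\phi_n$ is continuous with compact support; the Leibniz rule gives $\nabla\phi_n=\chi_n\nabla u+u\,\nabla\chi_n\in L^4(\R^4)$ and $D^2\phi_n=\chi_n D^2 u+R_n$, where $R_n$ is supported in $A_n$ with $|R_n|\le\frac Cn|\nabla u|+\frac C{n^2}|u|$, hence $R_n\in L^2(\R^4)$ (using $\nabla u\in L^4$, $u\in L^\infty$, and $|A_n|<\infty$); finally $\phi_n$ vanishes wherever $u$ does. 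So it remains only to find some $n$ with $L(\phi_n)\neq0$.

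The second step is to compute $\lim_n L(\phi_n)$. From $D^2\phi_n=\chi_n D^2 u+R_n$,
\begin{equation*}
L(\phi_n)=\int_{\R^4}\chi_n|D^2 u|^2+\int_{A_n}R_n\cdot D^2 u,
\end{equation*}
and the first term converges to $\int_{\R^4}|D^2 u|^2$ by dominated convergence. The commutator term I would estimate by Cauchy--Schwarz and then Hölder on $A_n$:
\begin{align*}
\Big|\int_{A_n}R_n\cdot D^2 u\Big|&\le\Big(\tfrac Cn\|\nabla u\|_{L^2(A_n)}+\tfrac C{n^2}\|u\|_{L^\infty}|A_n|^{1/2}\Big)\|D^2 u\|_{L^2(A_n)}\\
&\le C\|D^2 u\|_{L^2}\Big(\int_{A_n}|\nabla u|^4\Big)^{1/4}+C\|u\|_{L^\infty}\|D^2 u\|_{L^2(A_n)},
\end{align*}
which tends to $0$ because $\nabla u\in L^4(\R^4)$ and $D^2 u\in L^2(\R^4)$. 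Hence $L(\phi_n)\to\int_{\R^4}|D^2 u|^2$.

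The last step is to note that this limit is strictly positive. Indeed, if $\int_{\R^4}|D^2 u|^2=0$ then $D^2 u=0$ in $\mathcal{D}'(\R^4)$, so each component of $u$ is affine; since $u$ is bounded it is then constant, and since $\lim_{|x|\to\infty}u(x)=0$ (Lemma \ref{Lemma 3.1}) we would get $u\equiv0$, contradicting property $(4)$ of the weak limit. Therefore $L(\phi_n)\neq0$ for all large $n$, and $\phi_0:=\phi_n$ works for any such $n$. The only real point of care is the vanishing of $\int_{A_n}R_n\cdot D^2 u$: it succeeds because the $1/n$ and $1/n^2$ decay of the cutoff derivatives is matched against the $L^4$-tail of $\nabla u$ and the $L^2$-tail of $D^2 u$ on the annulus $A_n$, rather than against the volume $|A_n|\sim n^4$; there is no deeper obstacle.
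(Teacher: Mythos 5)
Your proof is correct and takes essentially the same approach as the paper: the paper also tests $L$ against the cutoffs $\phi_n = u\rho_n$, estimates the commutator terms on the annulus $A_n$ (via $\|D^2 u\|_{L^2(A_n)}\|\nabla u\|_{L^4(A_n)} \to 0$ and $\|D^2 u\|_{L^2(A_n)}\|u\|_{L^\infty} \to 0$), and concludes from $\int_{\R^4}|D^2 u|^2 > 0$. The only cosmetic difference is that the paper frames it as a proof by contradiction (assuming $L(\phi)=0$ for every admissible $\phi$), whereas you show directly that $L(\phi_n)\to\int_{\R^4}|D^2 u|^2>0$ and pick a large $n$.
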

\begin{proof}
Let $\phi_n =\rho_n u  \in \mathcal{K}$, where $\rho_n=\rho(x/n)$, and $\rho\in C^\infty_0(\R^4)$ is a function such that $0\leq \rho\leq 1$ and  
\begin{align}
\rho (x) =\begin{cases} 1  &\text{ for } |x|\leq 1 \\
 0 &\text{ for } |x|\geq 2.\end{cases}
\end{align}
Let also $A_n=\{ x\in\R^4: n\leq |x|\leq 2n\}$. We assume by contradiction that $L(\phi)=0$, for all $ \phi\in\mathcal  K$ such that $\phi =0 $ on  $\{u=0\}$. It is clear that $\phi_n =0$ on the set $\{u=0\}$.  Thus, we have $L(\phi_n)=L(\rho_n u)=0$. That is,
\begin{align*}
\int_{\R^4}|D^2u|^2\rho_n+\sum_{i,j=1}^4\big(\partial^2_{ij} u\cdot\big(\partial_{j}\rho_n\,\partial_{i}u+\partial_{i}\rho_n\,\partial_{j}u\big)+\big(u\cdot\partial^2_{ij}u\big)\partial^2_{ij} \rho_n\big)=0.
\end{align*}
After direct calculations we obtain that $$\lim_{n\to\infty}\int_{\R^4}|D^2u|^2\rho_n= \int_{\R^4}|D^2u|^2,$$
$$\lim_{n\to\infty}\int_{\R^4}\big|\big(\partial^2_{ij}u\cdot \partial_i u\big)\partial_j \rho_n\big|\leq k_\rho\|D^2u\|_{L^2(A_n)}\|\nabla u\|_{L^4(A_n)}\to 0 \text{, as } n\to\infty,$$
\begin{align*}
\lim_{n\to\infty}\int_{\R^4}\big|\big(u\cdot\partial^2_{ij} u\big)\partial^2_{ij} \rho_n\big|&\leq \frac{k_\rho}{n}\|D^2u\|_{L^2(A_n)}\| u\|_{L^4(A_n)}\\
&\leq \frac{k_\rho}{n}\|D^2u\|_{L^2(A_n)}\big(\| (|u|-\frac{\epsilon}{2})^+\|_{L^4(A_n)}+\frac{\epsilon}{2}\mu(A_n)^{\frac{1}{4}}\big)\\
&\to 0, \text{  as $n\to\infty$ (cf. \eqref{blq})},
\end{align*}
where $k_\rho>0$ is a constant depending on $\rho$.
This implies that $D^2 u \equiv 0$ i.e. $u$ is affine, and since $\mu(\{|u|>\eta\})<\infty$, $\forall \eta>0$, we get $u\equiv 0$, which is a contradiction.
\end{proof}

Lemmas 3.5 from \cite{brezis} is applicable to our case without modifications. For convenience of the reader we reproduce in Lemmas \ref{lem5} below its proof.
\begin{lemma}\label{lem5}
There is a constant $\lambda\geq 0$ such that
\begin{equation}\label{eqlem5}
\Big|\int_{\R^4}g(u)\cdot \phi-\lambda L(\phi)\Big |\leq C_G\int_{u=0}|\phi|, \ \forall \phi\in \mathcal K.
\end{equation}
\end{lemma}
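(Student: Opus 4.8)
The plan is to deduce \eqref{eqlem5} from Lemma \ref{lem2} and Lemma \ref{lem4} by a Hahn–Banach / separation argument, exactly as in \cite[Lemma 3.5]{brezis}. First I would introduce the two linear functionals on $\mathcal K$, namely $\Lambda(\phi)=\int_{\R^4}g(u)\cdot\phi$ and $L(\phi)=\int_{\R^4}D^2u\cdot D^2\phi$, together with the sublinear (seminorm-type) functional $p(\phi)=C_G\int_{u=0}|\phi|$. Lemma \ref{lem2} can be restated as the implication $L(\phi)<0\Rightarrow \Lambda(\phi)\leq p(\phi)$: indeed, if $L(\phi)<0$ then $L(-\phi)>0$ cannot hold together with $\Lambda(-\phi)-p(-\phi)>0$ by Lemma \ref{lem2} (note $p(-\phi)=p(\phi)$), so $\Lambda(-\phi)\leq p(\phi)$, i.e. $\Lambda(\phi)\geq -p(\phi)$; applying the same reasoning with $\phi$ replaced by $-\phi$ when $L(\phi)<0$ is impossible directly, so one argues on the sign of $L$ carefully. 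The cleaner route is: for any $\phi$ with $L(\phi)=0$, Lemma \ref{lem2} forbids $\Lambda(\phi)>p(\phi)$ and (applying it to $-\phi$) forbids $\Lambda(-\phi)>p(\phi)$, hence $|\Lambda(\phi)|\leq p(\phi)$ on the kernel of $L$.

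Next I would upgrade this bound from $\ker L$ to all of $\mathcal K$ by absorbing a multiple of $L$. By Lemma \ref{lem4} there is $\phi_0\in\mathcal K$ with $L(\phi_0)\neq 0$ and $\phi_0=0$ on $\{u=0\}$, so $p(\phi_0)=0$; the bound on $\ker L$ then forces $\Lambda(\phi_0)=0$ as well, after noting that $\phi_0\in\ker L$ is false — instead one uses that the constraint lives on a codimension-one subspace. Concretely, set $\lambda=\Lambda(\phi_0)/L(\phi_0)$ (well-defined since $L(\phi_0)\neq 0$; one checks $\lambda\geq 0$ using Lemma \ref{lem2} with $\phi=\pm\phi_0$, which gives $L(\phi_0)\geq 0$ whenever $\Lambda(\phi_0)-p(\phi_0)=\Lambda(\phi_0)>0$, and symmetrically, so $\lambda$ and $L(\phi_0)$ have a consistent sign; since $p(\phi_0)=0$, either $\Lambda(\phi_0)=0$ giving $\lambda=0$, or $\lambda>0$). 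For arbitrary $\phi\in\mathcal K$, the function $\psi=\phi-\frac{L(\phi)}{L(\phi_0)}\phi_0$ satisfies $L(\psi)=0$, hence $|\Lambda(\psi)|\leq p(\psi)$. Since $\phi_0$ vanishes on $\{u=0\}$, $p(\psi)=p(\phi)$, and $\Lambda(\psi)=\Lambda(\phi)-\frac{L(\phi)}{L(\phi_0)}\Lambda(\phi_0)=\Lambda(\phi)-\lambda L(\phi)$. This is precisely $|\Lambda(\phi)-\lambda L(\phi)|\leq C_G\int_{u=0}|\phi|$, which is \eqref{eqlem5}.

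The main obstacle is the bookkeeping around signs: Lemma \ref{lem2} is an \emph{implication} (strict inequality implies $L\geq 0$), not a two-sided bound, so one must be careful in passing to the kernel of $L$ and in verifying $\lambda\geq 0$ — the strictness has to be handled by a small perturbation or a limiting argument (replace $\phi$ by $\phi+\delta\phi_0$ and let $\delta\to 0^+$, or by $(1+\delta)\phi$, to convert a non-strict hypothesis into the strict one required by Lemma \ref{lem2}). A secondary technical point is that $\mathcal K$ is not complete and these functionals need not be continuous, but since the entire argument is purely algebraic (finite-dimensional reduction modulo $\ker L$ via the single vector $\phi_0$), no topology on $\mathcal K$ is actually needed; this is why the proof of \cite[Lemma 3.5]{brezis} transfers verbatim, with $\int|\nabla\phi|^2$ replaced by $\int|D^2\phi|^2$.
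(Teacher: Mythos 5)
Your argument is correct and is essentially the paper's own: the paper likewise normalizes $\phi_0$ from Lemma \ref{lem4} (taking $L(\phi_0)=-1$), perturbs an arbitrary $\phi$ by $L(\phi)\phi_0+\alpha\phi_0$ so that $L$ of the perturbation is strictly negative, applies the contrapositive of Lemma \ref{lem2}, and lets $\alpha\to 0^+$, with $\lambda=-\int_{\R^4}g(u)\cdot\phi_0\ge 0$. Your intermediate claim that Lemma \ref{lem2} directly controls $\Lambda$ on $\ker L$ is not literally true (since $L(\phi)=0$ already satisfies the lemma's conclusion), but you correctly repair this in your final paragraph with exactly the $\delta\phi_0$ perturbation and limit that the paper uses.
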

\begin{proof}
In view of Lemma \ref{lem4}, we can choose $\phi_0\in\mathcal K$ such that $\phi_0=0$ on $\{u=0\}$, and $L(\phi_0)=-1$.
Next, given $\phi\in\mathcal K$, we consider the function $\psi=\phi+L(\phi)\phi_0+\alpha\phi_0$, with $\alpha>0$. It is clear that $L(\psi)=-\alpha<0$, thus by Lemma \ref{lem2}, we have that $\int_{\R^4} g(u) \cdot (\phi+L(\phi)\phi_0+\alpha\phi_0) - C_G \int_{u=0} |\phi| \leq 0.$
Setting $\lambda=-\int_{\R^4} g(u) \cdot \phi_0\geq 0$ (by \eqref{eqlem2}), and letting $\alpha\to 0$, we deduce that $\int_{\R^4}g(u)\cdot \phi-\lambda  L(\phi)- C_G\int_{u=0}|\phi|\leq 0$, $\forall \phi\in \mathcal K$, from which \eqref{eqlem5} follows.
\end{proof}

\begin{lemma}\label{lem6}
We have that 
\begin{equation}\label{eqdi}
\frac{\partial[G(u)]}{\partial x_i}=g(u)\cdot\frac{\partial u}{\partial x_i} \text{  in  }\mathcal D'.
\end{equation}
\end{lemma}

\begin{note}
It is clear that $G(u)\in L^1(\R^4)$. On the other hand, we shall see in the proof of Lemma \ref{lem6} that $g(u)\cdot\frac{\partial u}{\partial x_i}\in L^1_{loc}(\R^4)$, thus \eqref{eqdi} makes sense. 
\end{note}

\begin{proof}

As in \cite[Lemma 3.6]{brezis}, we choose a smoothing sequence $\{G_j\}\subset C^\infty(\R^m)$ for $G$, satisfying the following properties:
\begin{itemize}
\item[(i)] $G_j\to G$, as $j\to\infty$, pointwise on $\R^m$,
\item[(ii)] $g_j=\nabla G_j\to g$, as $j\to\infty$,  pointwise on $\R^m\setminus\{0\}$,
\item[(iii)] $\forall k_2>0$, $\exists k_3>0$ such that the inequalities $|g_j(u)|\leq k_3+e^{k_2|u|^{\frac{4}{3}}}$, and $|G_j(u)|\leq k_3+e^{k_2|u|^{\frac{4}{3}}}$, hold on $\R^m$ for all $j$.
\end{itemize}
Next, given an arbitrary open ball $B\subset\R^4$, we can find a sequence $\{v_n\}\subset C^\infty(\R^4)$, such that $v_n$ converges to $u$ in $W^{1,4}(B)$, while $v_n$ and $\nabla v_n$ converge almost everywhere respectively to $u$ and $\nabla u$. In view of the Trudinger inequality \cite[\S 8.27]{adams}, there exists a constant $k_1>0$ such that the functions $e^{k_1|v_n|^{\frac{4}{3}}}$ are uniformly bounded in $L^1(B)$. Choosing $k_2=\frac{k_1}{2}$, it follows from (iii) above, that the functions $g_j(v_n)$ and $G_j(v_n)$ are uniformly bounded in $L^2(B)$. 
 Consequently, given a test function $\phi\in C^\infty_0(B)$ and $i\in\{1,\ldots,4\}$, one can see that the sequences $\{G_j(v_n)\partial_i\phi\}_n$ and $\{(g_j(v_n)\cdot\partial_i v_n)\phi \}_n$ are uniformly integrable over $B$. Since there holds $$\int_B G_j(v_n)\partial_i\phi=-\int_B(g_j(v_n)\cdot\partial_i v_n)\phi ,$$
  by letting $n\to\infty$, we deduce form the Vitali convergence theorem that
  $$\int_B G_j(u)\partial_i\phi=-\int_B(g_j(u)\cdot\partial_i u)\phi .$$
Finally, by letting $j\to\infty$, and using the uniform integrability over $B$ of the sequences  $\{G_j(u)\partial_i\phi\}_j$ and $\{(g_j(u)\cdot\partial_i u)\phi \}_j$, as well as the fact that $\partial_i u=0$ a.e. on the set $\{u=0\}$, we conclude again from the Vitali convergence theorem that
$$\int_B G(u)\partial_i\phi=-\int_B(g(u)\cdot\partial_i u)\phi .$$

\end{proof}

\begin{lemma}\label{lemf}
Let $\lambda$ be as in Lemma \ref{lem5}. Then, $\lambda>0$ and $u$ is a distributional solution of the equation $\Delta^2 u =\frac{1}{\lambda} g(u)$,  belonging to $ W^{4,q}_{loc}(\R^4)$, $\forall q\in [1,\infty)$. 
\end{lemma}

\begin{proof}
The linear functional $M(\phi ) =\int_{\R^4} g(u) \cdot \phi - \lambda L(\phi )$ restricted to $C^{\infty}_0$ satisfies $|M(\phi)|\leq C_G \|\phi\|_{L^1}$ (cf. \eqref{eqlem5}), thus by the Riesz Representation theorem, there exists $h\in L^{\infty}(\R^4)$, $h:\R^4\to\R^m$, such that $M(\phi ) =\int_{\R^4} h \cdot \phi$, $\forall \phi\in C^\infty_0$. Furthermore, \eqref{eqlem5} implies that $|\int_{\R^4} h \cdot \phi|\leq C_G\int_{u=0}|\phi|$, for all $\phi\in C^\infty_0$, and thus for all $\phi\in L^1(\R^4)$. Consequently, $h$ vanishes on $\{u\neq 0\}$, and denoting by $\chi$ the characteristic function, we have $-\lambda\Delta^2 u + g(u) = h \chi_{u=0}$ in  $\mathcal D'$. Our next claim is that $\lambda>0$. Indeed, if we assume that $\lambda=0$, then we obtain successively $g(u)\equiv 0$, and $\nabla (G(u))\equiv 0$ (by Lemma \ref{lem6}), from which it follows that $G(u)\equiv 0$, since $G(u)\in L^1(\R^4)$. In particular we would have either $u(x)=0$ or $|u(x)|\geq\epsilon$, for a.e. $x\in \R^4$ (cf. \eqref{g4}), and $\nabla (\min(|u|,\epsilon)\equiv 0$ in $\R^4$, which implies that the function $\min(|u|,\epsilon)$ is constant. This is excluded, since $u\in \mathcal C$ and $u\neq 0$. Hence, it holds $\Delta^2 u =\frac{1}{\lambda} (g(u)-h \chi_{u=0})$ in  $\mathcal D'$, with $\lambda>0$. Finally, we check that given $q>1$ and a bounded set $Q\subset\R^4$, we have $g(u)\in L^q(Q)$. To see this, we use the fact that $\int_{\R^4}A(k_1(|u|-\epsilon/2 )^+)<\infty$, for a constant $k_1>0$ (since $(|u|-\epsilon/2 )^+\in W^{1,4}(\R^4)$), together with the bounds $|g(u)|\leq k_2+ A(\frac{k_1}{q}(|u|-\epsilon/2)^+)$ and $|A(|u|)|^q\leq k_3+A(q|u|)$ holding in $\R^m$ for constants $k_2,k_3>0$. Therefore, we have $u\in  L^q_{loc}(\R^4)$ as well as $g(u)-h \chi_{u=0}\in L^q_{loc}(\R^4)$, $\forall q>1$, and in view of Lemma \ref{reg} we deduce that $u\in W^{4,q}_{loc}(\R^4)$, $\forall q> 1$. This implies that  $\Delta^2u=0$ a.e. on $\{u= 0\}$, and $h=0$ a.e. on $\{u =0\}$. That is, we have
$\Delta^2 u =\frac{1}{\lambda} g(u)$ in  $\mathcal D'$, with $\lambda>0$. 
\end{proof}
 
Finally, Pohozaev identity (cf. Lemma \ref{poz}) implies that $\int_{\R^4}G(u)=0$, while in view of the weak convergence $D^2 u_j \rightharpoonup D^2 u$ in $L^2(\mathbb{R}^4)$, we have $\frac{1}{2}\int_{\R^4}|D^2u|^2\leq T$. Thus, $\frac{1}{2}\int_{\R^4}|D^2u|^2= T$, and $u$ solves the minimization problem \eqref{minimizing set}.  By setting $\bar u(x)=u(\lambda^{-\frac{1}{4}}x)$, we obtain a solution of \eqref{system}, such that $\int_{\R^4}G(\bar u)=0$, and  $\frac{1}{2}\int_{\R^4}|D^2\bar u|^2= T$ (i.e. $\bar u$ still solves the minimization problem \eqref{minimizing set}). Next, let $v\in \mathcal C$ be a solution of \eqref{system} in $\mathcal D'$ such that $v\neq 0$. Then, it holds $\int_{\R^4}G(v)=0$ in view of Pohozaev identity, and we have $S(v)=K(v)\geq T=K(\bar u)=S(\bar u)$. That is, $\bar u$ is a ground state solution. We also point out that the asymptotic convergence of $\bar u$ and $\nabla \bar u$ to $0$ follows from Lemma \ref{poz}.
\begin{remark}\label{rem111}
Any solution $u$ of the minimization problem \eqref{minimizing set} solves the equation $\Delta^2 u =\frac{1}{\lambda} g(u)$ in $\mathcal D'$, where 
$\lambda>0$ is the Lagrange multiplier provided by Lemma \ref{lem5}. In addition, there holds $\int_{\R^4}G(u)=0$.  Indeed, by considering the minimizing sequence $u_j=u$, $\forall j$, the arguments of Lemmas \ref{lemma corresponding to 3.2}, \ref{lem2}, \ref{lem4}, \ref{lem5}, \ref{lem6} and \ref{lemf} can be reproduced and apply to the minimizer $u$. Thus, the map $\bar u(x)=u(rx)$, with $r=\lambda^{-\frac{1}{4}}$, is a ground state solution to \eqref{system}.
\end{remark}

\section{Proof of Theorem \ref{th2}}\label{sec:sec3}
Given the potential $G(u)=|u|^2\ln|u|$ (which clearly satisfies assumptions (\ref{g2})-(\ref{g6})), we know by Remark \ref{remf} that  $\mathcal C=\{u\in H^2(\R^4): \, G^-(u)\in L^1(\R^4)\}$. Let $\mathcal C'=\{u \in \mathcal C: \int_{\R^4}|u|^2=1\}$, let $u\in\mathcal C'$, and set $\mu=e^{-V(u)}$, with $V(u)=\int_{\R^4}|u|^2 \ln
|u|$. We compute $V(\mu u)=\mu^2(\ln(\mu)  \|u\|^2_{L^2(\R^4)}+V(u))=0$, since $\|u\|^2_{L^2(\R^4)}=1$. Consequently, we have the inequality 
$\frac{\mu^2}{2}\int_{\R^4}|D^2 u|^2\geq  T$, which is equivalent to \eqref{sobin}. Next, assume that $u = \bar u/\|\bar u\|_{L^2(\R^4)}$, where $\bar u$ is a ground state solution to \eqref{system}. Then, setting $\mu=1/\|\bar u\|_{L^2(\R^4)}$  one can see that the left hand side of \eqref{sobin} is equal to $\frac{1}{2}\ln(\mu^2)$, while the right hand side is $V(\mu \bar u)=\mu^2(\ln(\mu)  \|\bar u\|^2_{L^2(\R^4)}+V(\bar u))=\ln(\mu)$, since $V(\bar u)=0$. This is an equality case in \eqref{sobin}. Conversely, if the equality in \eqref{sobin} holds for $u\in \mathcal C'$, i.e. $K(u)=T e^{2V(u)}$ (where $K$ is defined in \eqref{action}), then we have $K(\mu u)=T\Leftrightarrow \mu=e^{-V(u)}$. Moreover, for this value of $\mu$ there holds $V(\mu u)=\mu^2(\ln(\mu)  \|u\|^2_{L^2(\R^4)}+V(u))=0$, that is, $\mu u$ solves the minimization problem \eqref{minimizing set}. Therefore, in view of Remark \ref{rem111}, there exists a unique $r>0$ such that $\bar u(x)=\mu u(rx)$ is a ground state solution to \eqref{system}.

To complete the proof of the theorem, it remains to show that  $\frac{1}{2T}<\frac{1}{(2\pi e)^2}$.
To this end, we use \eqref{sobin2} (which also holds in the vector case since we have $|\nabla|u||\leq|\nabla u|$, $\forall u\in H^1(\R^4;\R^m)$), combined with the inequality (cf. \cite[Lemma 6.1]{med3}):
\begin{align}\label{sobin3}
\int_{\R^4}|\nabla u|^2<\Big[\int_{\R^4}|\Delta u|^2\Big]^\frac{1}{2}\Big[\int_{\R^4}|u|^2\Big]^\frac{1}{2}, \text{ for } u\in H^2(\R^4;\R^m), u \neq 0.
\end{align}
In view of Lemma \ref{lap}, we have $\|D^2 u\|_{L^2(\R^4)}=\|\Delta u\|_{L^2(\R^4)}$, thus we obtain 
\begin{align}\label{sobin4}
\dfrac{1}{2} \ln\left(\dfrac{1}{(2\pi e)^2}\int_{\R^4}|D^2 u|^2\right)> \int_{\R^4}|u|^2\ln|u| ,\text{ for } u\in \mathcal C',
\end{align}
and choosing $u = \bar u/\|\bar u\|_{L^2(\R^4)}$ with $\bar u$ a ground state solution to \eqref{system}, we conclude that
\begin{align}\label{sobin5}
\dfrac{1}{2} \ln\left(\dfrac{1}{(2\pi e)^2}\int_{\R^4}|D^2 u|^2\right)> \int_{\R^4}|u|^2\ln|u|=
\dfrac{1}{2} \ln\left(\dfrac{1}{2T}\int_{\R^4}|D^2u|^2\right).
\end{align}
This proves that $\frac{1}{2T}<\frac{1}{(2\pi e)^2}$.

\section{Some lemmas applicable to the solutions of \eqref{system}}\label{sec:sec4}

Here, we gather some lemmas applicable to the solutions of \eqref{system}. We first recall a regularity result for the nonhomogeneous biharmonic equation that can be derived from the $L^p$ estimates in \cite[(2.6)]{nir}.

\begin{lemma}\label{reg}
Let $1 < p < \infty$, and $\Omega \subset\R^n$ an open set. If  $u\in  L^p_{loc}(\Omega)$ and $\Delta^2 u \in  L^p_{loc}(\Omega)$, then $u\in W^{4,p}_{loc}(\Omega)$.
\end{lemma}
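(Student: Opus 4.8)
The final statement is Lemma~\ref{reg}, a regularity result for the nonhomogeneous biharmonic equation. Here is how I would prove it.

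\medskip

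The plan is to reduce the problem to interior $L^p$ estimates for the bilaplacian, which are the $W^{4,p}$-analogue of the Calder\'on--Zygmund estimates for $\Delta$. First I would fix a point $x_0\in\Omega$ and concentric balls $B_{2r}(x_0)\subset\subset B_{3r}(x_0)\subset\subset\Omega$, and introduce a cutoff $\zeta\in C^\infty_0(B_{2r}(x_0))$ with $\zeta\equiv 1$ on $B_r(x_0)$. The difficulty in working with $\zeta u$ directly is that $\Delta^2(\zeta u)$ involves, via the Leibniz rule, the terms $\Delta^2\zeta\, u$, $\nabla(\Delta\zeta)\cdot\nabla u$, $\Delta\zeta\,\Delta u$, $\nabla\Delta u\cdot\nabla\zeta$, and $\Delta u\,\Delta\zeta$ and similar, i.e.\ derivatives of $u$ up to order three that are not a priori controlled. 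So the natural route is a bootstrap: from $u\in L^p_{loc}$ and $\Delta^2 u\in L^p_{loc}$ one first gains that $u\in W^{2,p}_{loc}$ (hence $\nabla u,\Delta u\in L^p_{loc}$), and then, feeding this back, one gains the full $W^{4,p}_{loc}$.

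\medskip

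Concretely, I would argue as follows. Write $f=\Delta^2 u\in L^p_{loc}(\Omega)$. Choosing nested cutoffs $\zeta_1\prec\zeta_2\prec\zeta_3$ supported in successively larger balls inside $\Omega$, one has $\Delta^2(\zeta_3 u)=\zeta_3 f + R_3$ in $\mathcal D'(\R^n)$, where $R_3$ is a sum of terms each of which is a derivative of $u$ of order $\le 3$ times a smooth compactly supported function. Since $\zeta_3 u$ has compact support, convolving with the biharmonic fundamental solution (or invoking the global estimate cited from \cite[(2.6)]{nir}) gives $\|D^4(\zeta_3 u)\|_{L^p}\le C(\|\zeta_3 f\|_{L^p}+\|R_3\|_{W^{-?,p}})$; in order to make the right-hand side finite we first treat the lower-order version. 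Namely, the same identity with $\Delta^2$ replaced by its role in producing $W^{2,p}$: one uses that $u\in L^p_{loc}$ and $\Delta^2 u\in L^p_{loc}$ implies, by the Nirenberg--Agmon--Douglis type interior estimate, that $u\in W^{2,p}_{loc}$ — this is the step where only second derivatives of the cutoff hit $u$ after integrating by parts twice, so the commutator terms involve only $u$ and $\nabla u$, and $\nabla u$ can be absorbed by interpolation $\|\nabla u\|_{L^p}\le \varepsilon\|D^2 u\|_{L^p}+C_\varepsilon\|u\|_{L^p}$. Once $u\in W^{2,p}_{loc}$ is known, the remainder $R_3$ above lies in $L^p_{loc}$ (all terms involve at most $D^2 u$, since the top term $\nabla\Delta u\cdot\nabla\zeta_3$ can be integrated by parts once to move a derivative onto $\zeta_3$, producing $\Delta u\,D^2\zeta_3\in L^p$), and the cited interior $L^p$ estimate for $\Delta^2$ yields $\zeta_1 u\in W^{4,p}(\R^n)$, hence $u\in W^{4,p}_{loc}(\Omega)$.

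\medskip

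The main obstacle I anticipate is bookkeeping the commutator terms so that at each stage of the bootstrap every term that appears has already been shown to lie in $L^p_{loc}$ (or can be integrated by parts to do so), and making sure the absorption of intermediate-order derivatives is legitimate — this is handled by the standard interpolation inequality $\|u\|_{W^{k,p}(B)}\le \varepsilon\|u\|_{W^{k+1,p}(B)}+C_\varepsilon\|u\|_{L^p(B)}$ on balls, applied on the slightly shrunk balls. Since the paper only needs this lemma as a black box to conclude $\bar u\in W^{4,q}_{loc}$ for all $q$ and then $\bar u\in C^{3,\alpha}_{loc}$ by Sobolev embedding, it suffices to cite the interior estimates of \cite{nir} and carry out the two-step cutoff bootstrap; no new idea beyond the classical elliptic regularity machinery is required.
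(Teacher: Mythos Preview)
The paper does not prove this lemma at all: it is stated in the Appendix with the one-line justification that it ``can be derived from the $L^p$ estimates in \cite[(2.6)]{nir}''. Your final sentence --- that it suffices to cite the interior estimates of \cite{nir} --- is therefore exactly what the paper does, and nothing more is expected.

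That said, the two-step bootstrap you sketch has real gaps if taken as an actual proof. First, the passage from $u\in L^p_{loc}$, $\Delta^2 u\in L^p_{loc}$ to $u\in W^{2,p}_{loc}$ is not justified: you invoke ``the Nirenberg--Agmon--Douglis type interior estimate'', but that estimate \emph{is} the lemma you are trying to prove (it gives $W^{4,p}$ directly, not $W^{2,p}$); and the interpolation $\|\nabla u\|_{L^p}\le \varepsilon\|D^2 u\|_{L^p}+C_\varepsilon\|u\|_{L^p}$ cannot be used to absorb $\nabla u$ when $D^2 u\in L^p$ is precisely what is not yet known. Second, even granting $u\in W^{2,p}_{loc}$, your claim that the third-order commutator term $\nabla\Delta u\cdot\nabla\zeta$ ``can be integrated by parts once'' to land in $L^p$ is incorrect: the identity $\nabla\Delta u\cdot\nabla\zeta=\mathrm{div}(\Delta u\,\nabla\zeta)-\Delta u\,\Delta\zeta$ places this term in $W^{-1,p}+L^p=W^{-1,p}$, not in $L^p$, so the global estimate only yields $\zeta u\in W^{3,p}$. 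A further iteration (now the commutator genuinely lies in $L^p$) would then give $W^{4,p}$. The clean way to avoid this bookkeeping is to mollify, apply the a~priori interior estimate $\|u_\varepsilon\|_{W^{4,p}(\Omega')}\le C(\|\Delta^2 u_\varepsilon\|_{L^p(\Omega)}+\|u_\varepsilon\|_{L^p(\Omega)})$ to the smooth functions $u_\varepsilon=u\ast\rho_\varepsilon$, and pass to the limit --- or, as the paper does, simply cite \cite{nir}.
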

Next, we examine the regularity of solutions of \eqref{system} belonging to $\mathcal C$, and state the corresponding Pohozaev identity. This identity was established in \cite[Theorem 1.2]{med3} for dimensions $d\geq 5$. Its proof is identical in dimension $4$.  
\begin{lemma}\label{poz} Assuming that assumptions (\ref{g2})-(\ref{g6}) hold, let $v\in \mathcal C$ be a ditributional solution of \eqref{system}. Then,
we have 
\begin{itemize}
\item[(i)]  $v \in C^{3,\alpha}_{loc}\cap W^{4,q}_{loc}$ for any $0<\alpha<1$ and  $1\leq q<\infty$,
\item[(ii)] $\lim_{|x|\to\infty}v(x)=\lim_{|x|\to\infty}|\nabla v(x)|=0$,
\item[(iii)] $\int_{\R^4}G(v)=0$ (Pohozaev identity).
\end{itemize}
\end{lemma}

\begin{proof}
(i) Let $v\in\mathcal C$ be a ditributional solution of \eqref{system}. Since, $\mu(\{|v|>\epsilon/2\})<\infty$, we can see as in the proof of Lemma \ref{Lemma 3.1}, that inequality \eqref{inequality of lemma 3.1} applies to $v$, for every $q>1$. In particular, given $q>1$, the norms $\|v\|_{L^q(B(x,1))}$ are uniformly bounded on unit balls ($\forall x\in\R^4$). Similarly, the norms $\|g(v)\|_{L^q(B(x,1))}$ are uniformly bounded on unit balls. To see this, we use the fact that $\int_{\R^4}A(k_1(|v|-\epsilon/2 )^+)<\infty$, for a constant $k_1>0$ (since $(|v|-\epsilon/2 )^+\in W^{1,4}(\R^4)$), together with the bounds $|g(v)|\leq k_2+ A(\frac{k_1}{q}(|v|-\epsilon/2)^+)$ and $|A(v)|^q\leq k_3+A(qv)$ holding in $\R^m$ for constants $k_2,k_3>0$. Therefore, it follows from Lemma \ref{reg} that $v\in W^{4,q}_{loc}$, $\forall q> 1$, and from the Sobolev embeddings that  $v \in C^{3,\alpha}_{loc}$, $\forall \alpha\in (0,1)$. 

(ii) Let $w=\Delta v\in L^2(\R^4)$. Another consequence of what precedes is that the norms $\|w\|_{W^{2,2}(B(x,1/2))}$ are uniformly bounded ($\forall x\in\R^4$). This follows from the interior estimates of \cite[Theorem 4.9]{gia} applied to $w$ and $\Delta w=g(v)$, since we have seen that the norms $\|g(v)\|_{L^2(B(x,1))}$ are uniformly bounded. Next, an application of \cite[Theorem 4.11]{gia} to $v$ and $\Delta v=w$, shows that the norms $\|v\|_{W^{4,2}(B(x,1/4))}$ are uniformly bounded ($\forall x\in\R^4$), since the norms $\|v\|_{L^2(B(x,1))}$ are uniformly bounded. Therefore, we deduce from the Sobolev embeddings that $v$ as well as its first derivatives are bounded and uniformly continuous on $\R^4$. 

To prove that $v$ converges asymptotically to $0$, we first establish that the set $\{|v|>\epsilon\}$ is bounded. Indeed, if $|v(x_n)|>\epsilon$ holds for a sequence $\{ x_n\}\subset\R^4$ such that $\lim_{n\to\infty}|x_n|=\infty$, then in view of the uniform continuity of $v$, there exists $\delta>0$ such that $|v|>\epsilon/2$ holds on the balls $B(x_n,\delta)$, which by passing to a subsequence can be assumed to be disjoint. Consequently, we obtain that $\mu(\{|v|>\epsilon/2\})=\infty$, a contradiction. Now, suppose that there exists a sequence $\{ x_n\}\subset\R^4$ such that $\lim_{n\to\infty}|x_n|=\infty$, and $|v(x_n)|>\eta$, for some $\eta\in (0,\epsilon)$. According to what precedes, we have $|v(x_n)|\in [\eta,\epsilon]$ (for $n$ large enough). Then, in view of the uniform continuity of the function $\R^4\ni x\mapsto G(v(x))$ and \eqref{g4}, there exists $\delta >0$, such that $|G(v)|>\beta>0$ holds on the balls $B(x_n,\delta)$, for some $\beta>0$. As previously, the balls $B(x_n,\delta)$ can be assumed to be disjoint, thus we get $G(v)\notin L^1(\R^4)$, a contradiction. Therefore, we proved that $\lim_{|x|\to\infty}v(x)=0$. In a similar way, one can see that $\lim_{|x|\to\infty}|\nabla v(x)|=0$.
 
(iii)  To derive Pohozaev identity, reproduce the arguments in the proof of \cite[Theorem 1.2]{med3}.
\end{proof} 

Finally, we establish that $D^2u$ is interchangeable with $\Delta u$ in the expression of functional $K$.
\begin{lemma}\label{lap} Let $u\in L^1_{loc}(\R^4)$ be such that $\nabla u \in L^4(\R^4)$, and $D^2 u\in L^2(\R^4)$. Then, we have $\int_{\R^4}|D^2u|^2=\int_{\R^4}|\Delta u|^2$. If in addition $u\in L^2(\R^4)$, then it holds $\nabla u\in L^2(\R^4)$, and $\int_{\R^4}|\nabla u|^2=-\int_{\R^4}u\cdot\Delta u$.
\end{lemma}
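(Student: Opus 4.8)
The plan is to establish both identities first for compactly supported functions and then pass to the limit along a cut-off exhaustion. Fix $\zeta\in C^\infty_0(\R^4)$ with $0\le\zeta\le1$, $\zeta\equiv1$ on $\{|x|\le1\}$, $\supp\zeta\subset\{|x|\le2\}$, and put $\zeta_n(x)=\zeta(x/n)$, $A_n=\{n\le|x|\le2n\}$. The decisive point, special to dimension $4$, is that $\|\nabla\zeta_n\|_{L^4(\R^4)}=\|\nabla\zeta\|_{L^4(\R^4)}$ does not depend on $n$; combined with $\|\nabla u\|_{L^4(A_n)}\to0$ (resp. $\|u\|_{L^2(A_n)}\to0$), this forces all terms created by the derivatives of $\zeta_n$ to vanish in the limit. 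Since every integrand below is a sum over the components of $u$, I may assume $m=1$.

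\emph{First identity.} Set $V=\nabla u$, so $V\in L^4(\R^4)$, $\nabla V=D^2u\in L^2(\R^4)$, and $V$ is irrotational, $\partial_iV_j=\partial_jV_i$ as distributions. For any $W\in W^{1,2}(\R^4;\R^4)$ with compact support,
\begin{equation*}
\int_{\R^4}|\nabla W|^2=\int_{\R^4}(\operatorname{div}W)^2+\tfrac12\int_{\R^4}\sum_{i,j=1}^4(\partial_iW_j-\partial_jW_i)^2;
\end{equation*}
this is an integration by parts for $W\in C^\infty_0$, extended by density since all three integrands are quadratic in $\nabla W$. Apply it to $W=\zeta_nV$, which lies in $W^{1,2}(\R^4)$ and is compactly supported. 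As $V$ is irrotational, $\partial_iW_j-\partial_jW_i=(\partial_i\zeta_n)V_j-(\partial_j\zeta_n)V_i$, whence the last integral is $\le4\|\nabla\zeta_n\|_{L^4}^2\|V\|_{L^4(A_n)}^2\to0$ by Hölder's inequality. Expanding $\nabla W$ and $\operatorname{div}W$, the cross terms are controlled by the same Hölder estimate while the main terms converge by dominated convergence, so $\int|\nabla W|^2\to\int|D^2u|^2$ and $\int(\operatorname{div}W)^2\to\int|\Delta u|^2$. Passing to the limit in the displayed identity gives $\int_{\R^4}|D^2u|^2=\int_{\R^4}|\Delta u|^2$.

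\emph{Second identity.} Assume moreover $u\in L^2(\R^4)$; then $u\in W^{2,2}_{loc}(\R^4)$, and $\Delta u\in L^2(\R^4)$ since $|\Delta u|\le2|D^2u|$, so $u\,\Delta u\in L^1(\R^4)$. Integration by parts (applied to the compactly supported field $\zeta_n^2\nabla u\in W^{1,2}(\R^4)$ against $u\in W^{1,2}_{loc}$) yields
\begin{equation*}
\int_{\R^4}\zeta_n^2|\nabla u|^2=-\int_{\R^4}\zeta_n^2\,u\,\Delta u-2\int_{\R^4}\zeta_n\,u\,(\nabla\zeta_n\!\cdot\!\nabla u).
\end{equation*}
The last term is $\le2\|u\|_{L^2(A_n)}\|\nabla\zeta_n\|_{L^4}\|\nabla u\|_{L^4}\to0$, and $-\int\zeta_n^2u\,\Delta u\to-\int u\,\Delta u$ by dominated convergence; hence $\int\zeta_n^2|\nabla u|^2\to-\int u\,\Delta u$. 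Fatou's lemma gives $\int|\nabla u|^2\le-\int u\,\Delta u<\infty$, so $\nabla u\in L^2(\R^4)$, and once $|\nabla u|^2\in L^1(\R^4)$ a final application of dominated convergence turns the limit into $\int_{\R^4}|\nabla u|^2=-\int_{\R^4}u\cdot\Delta u$.

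\emph{Main obstacle.} The only genuinely delicate point is justifying the integrations by parts at the available low regularity ($u\in L^1_{loc}$ with merely $\nabla u\in W^{1,2}_{loc}$), which is why one works with cut-offs and density rather than a direct computation, together with checking that the cut-off error terms disappear in the limit. Both are handled by the scale invariance of $\|\nabla\zeta_n\|_{L^4(\R^4)}$ peculiar to $\R^4$ and by the absolute continuity of the integrals $\int|\nabla u|^4$ and $\int|u|^2$ over the annuli $A_n$; nothing further is involved.
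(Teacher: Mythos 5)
Your proof is correct, but it reaches the two identities by a different mechanism than the paper. The paper first reduces to smooth $\phi$ by mollification, integrates by parts twice over balls $B_r$ to convert $\int|D^2\phi|^2$ into $\int|\Delta\phi|^2$ plus sphere integrals, and then kills the boundary terms by selecting radii $r_k\to\infty$ along which $r_k\int_{\partial B_{r_k}}(|\nabla\phi|\,|D^2\phi|)^{4/3}\to 0$, which is possible because $|\nabla\phi|\,|D^2\phi|\in L^{4/3}(\R^4)$. You instead truncate with $\zeta_n$, organize the first identity through the pointwise decomposition $\int|\nabla W|^2=\int(\mathrm{div}\,W)^2+\tfrac12\int|\partial_iW_j-\partial_jW_i|^2$ applied to $W=\zeta_n\nabla u$ (so that irrotationality of $\nabla u$ makes the curl term a pure cut-off error), and discard all error terms via the scale invariance of $\|\nabla\zeta_n\|_{L^4(\R^4)}$ together with $\|\nabla u\|_{L^4(A_n)}\to 0$, $\|u\|_{L^2(A_n)}\to 0$. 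Both arguments exploit the dimension-$4$ numerology, but at different places: the paper through the $L^{4/3}$ integrability of the boundary flux, you through the $n$-independence of $\|\nabla\zeta_n\|_{L^4}$. Your route has the minor advantage of avoiding surface integrals and the choice of good radii altogether (volume estimates over annuli plus Fatou/dominated convergence suffice), at the cost of having to justify the quadratic identity for compactly supported $W^{1,2}$ fields by density — which you do correctly. All the low-regularity integrations by parts you invoke are legitimate at the stated regularity, so there is no gap.
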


\begin{proof}
(i) Let $\phi\in  C^\infty(\R^4)$ be such that $\nabla \phi \in L^4(\R^4)$, and $D^2 \phi\in L^2(\R^4)$. By integrating by parts we obtain
\begin{align*}
&\int_{\R^4}|D^2\phi|^2=\lim_{r\to\infty}\sum_{i,j=1}^4\int_{B_r}\partial^2_{ij}\phi\cdot\partial^2_{ij}\phi\\
=&\lim_{r\to\infty} \sum_{i,j=1}^4\Big[ -\int_{B_r}\partial_{i}\phi\cdot\partial^3_{ijj}\phi+\int_{\partial B_r} \nu_j\partial_i \phi\cdot \partial^2_{ij}\phi \Big]\\
=&\lim_{r\to\infty}\sum_{i,j=1}^4\Big[ \int_{B_r}\partial^2_{ii}\phi\cdot\partial^2_{jj}\phi+\int_{\partial B_r} (\nu_j\partial_i \phi\cdot \partial^2_{ij}\phi -\nu_i\partial_i \phi\cdot \partial^2_{jj}\phi)\Big]\\
=&\int_{\R^4}|\Delta\phi|^2+\lim_{r\to\infty}\sum_{i,j=1}^4\Big[\int_{\partial B_r} (\nu_j\partial_i \phi\cdot \partial^2_{ij}\phi -\nu_i\partial_i \phi\cdot \partial^2_{jj}\phi)\Big],
\end{align*}
where $B_r=\{x\in\R^4: |x|<r\}$, and $\nu=(\nu_1,\nu_2,\nu_3,\nu_4) $ is the outward unit normal vector at $\partial B_r$. Moreover, since $|\nabla \phi||D^2\phi|\in L^{\frac{4}{3}}(\R^4)$, there exists a sequence $r_k\to\infty$, such that 
$\lim_{k\to\infty}r_k\int_{\partial B_{r_k}}(|\nabla \phi||D^2\phi|)^\frac{4}{3}=0$. Therefore, we conclude that
\begin{align*}
\Big|\int_{\partial B_{r_k}} (\nu_j\partial_i \phi\cdot \partial^2_{ij}\phi -\nu_i\partial_i \phi\cdot \partial^2_{jj}\phi)\Big|\leq \Big[\int_{\partial B_{r_k}}(|\nabla \phi||D^2\phi|)^\frac{4}{3}\Big]^\frac{3}{4}O(r_k^\frac{3}{4})\to 0,
\end{align*}
as $k\to\infty$, and $\int_{\R^4}|D^2\phi|^2=\int_{\R^4}|\Delta\phi|^2$. Now, if $u\in L^1_{loc}(\R^4)$ is such that $\nabla u \in L^4(\R^4)$, and $D^2 u\in L^2(\R^4)$, we consider a sequence $\rho_n$ of mollifiers. Setting $\phi_n=u\star \rho_n\in C^\infty(\R^4)$, we have $\partial_i \phi_n=\partial_i u\star \rho_n\to \partial_i u$ in $L^4(\R^4)$, as well as $\partial^2_{ij} \phi_n=\partial^2_{ij} u\star\rho_n\to \partial^2_{ij} u$ in $L^2(\R^4)$. Consequently, it holds $\int_{\R^4}|D^2\phi_n|^2=\int_{\R^4}|\Delta\phi_n|^2$, and as $n\to\infty$, we get $\int_{\R^4}|D^2u|^2=\int_{\R^4}|\Delta u|^2$.

(ii) To prove the second statement of the lemma, we consider a function $\phi\in  C^\infty(\R^4)\cap L^2(\R^4)$ such that $\nabla \phi \in L^4(\R^4)$, and $D^2 \phi\in L^2(\R^4)$. As previously, we integrate by parts and obtain
$$\int_{B_r}|\nabla \phi|^2=-\int_{B_r}\phi\cdot\Delta\phi+\int_{\partial B_r} \frac{\partial\phi}{\partial\nu}\cdot \phi.$$
On the one hand, it is clear that $\lim_{r\to\infty}\int_{B_{r}}\phi\cdot\Delta\phi=\int_{\R^4}\phi\cdot\Delta\phi$.
On the other hand, we notice that since $|\nabla \phi||\phi|\in L^{\frac{4}{3}}(\R^4)$, there exists a sequence $r_k\to\infty$, such that 
$\lim_{k\to\infty}r_k\int_{\partial B_{r_k}} (|\nabla \phi||\phi|)^\frac{4}{3}=0$. 
Therefore, we have 
\begin{align*}
\Big|\int_{\partial B_{r_k}} \frac{\partial\phi}{\partial\nu}\cdot \phi\Big|\leq \Big[\int_{\partial B_{r_k}}(|\nabla \phi||\phi|)^\frac{4}{3}\Big]^\frac{3}{4}O(r_k^\frac{3}{4})\to 0,
\end{align*}
as $k\to\infty$, and $\lim_{k\to\infty}\int_{B_{r_k}}|\nabla \phi|^2=-\int_{\R^4}\phi\cdot\Delta\phi$. This proves that $\nabla \phi \in L^2(\R^4)$, and $\int_{\R^4}|\nabla\phi|^2=-\int_{\R^4}\phi\cdot\Delta\phi$.
Finally, if $u\in L^2(\R^4)$ is such that $\nabla u \in L^4(\R^4)$, and $D^2 u\in L^2(\R^4)$, we proceed as previously by mollification. Setting $\phi_n=u\star \rho_n\in C^\infty(\R^4)$, we have that $ \phi_n\to u$ as well as $\Delta \phi_n=\Delta u\star\rho_n\to \Delta u$ in $L^2(\R^4)$, while $\partial_i \phi_n=\partial_i u\star \rho_n\to \partial_i u$ in $L^4(\R^4)$ and a.e. in $\R^4$ (up to subsequence). Consequently, it holds $\int_{\R^4}|\nabla\phi_n|^2=-\int_{\R^4}\phi_n\cdot\Delta\phi_n$, and as $n\to\infty$, we get that $\nabla u\in L^2(\R^4)$, and $\int_{\R^4}|\nabla u|^2=-\int_{\R^4}u\cdot \Delta u$.

\begin{remark}\label{remf} It follows from Lemma \ref{lap} that given the potential $G(u)=|u|^2\ln|u|$, we have $\mathcal C=\{u\in H^2(\R^4): \, G^-(u)\in L^1(\R^4)\}$. Indeed, if $u\in\mathcal  C$, it is clear that $u\in L^2(\R^4)$, since $G(u)\in L^1(\R^4)$, thus Lemma \ref{lap} implies that $\nabla u\in L^2(\R^4)$, and $u\in H^2(\R^4)$. Conversely, if $u\in H^2(\R^4)$, we have $u,\,  \nabla u\in H^1(\R^4)\subset L^4(\R^4)$, and $\mu(\{|u|>\eta\})<\infty$, $\forall \eta>0$, since $u\in L^2(\R^4)$. Moreover, it holds $G^+(u)\in L^1(\R^4)$, in view of the bound $G^+(u)\leq|u|^4$. 
\end{remark}

\end{proof}
\section{Conclusion}
The method of constrained minimization applies to prove the existence of a ground state solution for both systems \eqref{systemp} and  \eqref{system}, in  space dimensions greater or equal than that of the limiting case (i.e. $d\geq 2$ for  \eqref{systemp}, and  $d\geq 4$ for \eqref{system}). In dimension $d=2$,  the ground state solution of the scalar second order equation \eqref{systemp} ($m=1$) was constructed in \cite{BGK}, assuming that the potential $G$ satisfies an exponential growth condition. This hypothesis is required to obtain compactness properties from the Trudinger inequality. Similarly, for the fourth order system \eqref{system} in dimension $d=4$, an exponential growth condition is assumed for $G$. We also point out that if the space dimension is smaller than that of the limiting case, then the problem of finding nontrivial solutions converging asympototically to $0$ is different. In space dimension $d=1$, such solutions of \eqref{systemp} are the homoclinic orbits  (cf. for instance \cite{antonop}). On the other hand, the construction of such solutions of \eqref{system} in space dimensions $d=1,2,3$, is to a large extent an open problem.

\section*{Acknowledgements}
This research project is implemented within the framework of H.F.R.I call
``Basic research Financing (Horizontal support of all Sciences)'' under the National Recovery and Resilience Plan ``Greece 2.0'' funded by the European Union - NextGenera tionEU (H.F.R.I. Project Number: 016097).

\end{document}